\newtheorem{theorem}{Theorem}
\newtheorem{lemma}[theorem]{Lemma}
\newtheorem{conjecture}[theorem]{Conjecture}
\newtheorem{proposition}[theorem]{Proposition}
\newtheorem{corollary}[theorem]{Corollary}
\theoremstyle{definition}
\theoremstyle{remark}
\newcommand*{\DashedArrow}[1][]{\mathbin{\tikz [baseline=-0.25ex,-latex, dashed,#1] \draw [#1] (0pt,0.5ex) -- (1.3em,0.5ex);}}
\renewcommand{\a}{\alpha}
\renewcommand{\b}{\beta}
\newcommand{\g}{\gamma}
\renewcommand{\d}{\delta}
\newcommand{\e}{\epsilon}
\renewcommand{\l}{\lambda}
\newcommand{\m}{\mu}
\renewcommand{\o}{\omega}
\renewcommand{\r}{\rho}
\newcommand{\s}{\sigma}
\renewcommand{\t}{\tau}
\newcommand{\z}{\zeta}
\newcommand{\D}{\Delta}
\newcommand{\G}{\Gamma}
\newcommand{\Ocal}{{\mathcal O}}
\newcommand{\CC}{\mathbb{C}}
\newcommand{\NN}{\mathbb{N}}
\newcommand{\PP}{\mathbb{P}}
\newcommand{\QQ}{\mathbb{Q}}
\newcommand{\RR}{\mathbb{R}}
\newcommand{\ZZ}{\mathbb{Z}}
\newcommand{\cufr}{\operatorname{cufr}}
\newcommand{\Div}{\operatorname{Div}}
\newcommand{\lcm}{\operatorname{lcm}}
\newcommand{\notdivide}{\nmid}
\newcommand{\ord}{\operatorname{ord}}
\newcommand{\Pfr}{\operatorname{fr}}
\newcommand{\rank}{\operatorname{rank}}
\newcommand{\sgn}{\operatorname{sign}}
\newcommand{\sq}{\operatorname{sq}}
\newcommand{\sqfr}{\operatorname{sqfr}}
\newcommand{\ve}{\mathbf T}
\begin{document}

%%%%% To ease editing, for IMPAN journals add:

\baselineskip=17pt

%%%%%%%%%%%

%% In the running head, replace first names by initials 
%% and give an abbreviation of the title.

\title[average value of the canonical height]{On the average value of the canonical height in higher dimensional families of elliptic curves}

\author[W. Wong]{Wei Pin Wong}
\address{Mathematics Department, Box 1917
         Brown University, Providence, RI 02912 USA}
\email{wongpin101@math.brown.edu}

\date{\today}

\begin{abstract}
Given an elliptic curve $E$ over a function field $K=\QQ(T_1, \ldots, T_n)$, we study the behavior of the canonical height $\hat{h}_{E_\o}$ of the specialized elliptic curve $E_\o$ with respect to the height of $\o \in \QQ^n$. In this paper, we prove that there exists a uniform non-zero lower bound for the average of the quotient $\frac{\hat{h}_{E_\o}(P_\o)}{h(\o)}$ for all non-torsion $P \in E(K)$.
\end{abstract}

\subjclass[2010]{Primary 11G05; Secondary: 11G50, 14G40}

\keywords{height function, elliptic curve, function field, average, lower bound}

\maketitle
%%%%%%%%%%%%%%%%%%%%%%%%%%%%%%%%%%%%%%%%%%%%%%%%%%%%%
\section{Introduction}
%%%%%%%%%%%%%%%%%%%%%%%%%%%%%%%%%%%%%%%%%%%%%%%%%%%%%
Let $K$ be the function field $\QQ(T_1, \ldots, T_n)$ and $\ve=(T_1, \ldots, T_n)$. Let $E/K$ be an elliptic curve with Weierstrass equation:
$$ Y^2=X^3+A(\ve)X+B(\ve) $$ where by change of variable, we can assume $A(\ve),B(\ve)\in \ZZ[\ve]$ and there's no nonconstant $g(\ve)\in \QQ[\ve]$ such that $$\frac{ A(\ve)}{g(\ve)^4},\frac{B(\ve)}{g(\ve)^6 } \in \ZZ[\ve].$$ We further assume that $E/K$ is not split over $K$, i.e. $E$ is not $K$-birational isomorphic to $E_0 \times_{\QQ} K$ for any elliptic curve $E_0/\QQ$. This implies $A(\ve)$ and $B(\ve)$ cannot be both constant. The discriminant $$\D_E(\ve)=-16(4A^3(\ve)+27B^2(\ve))$$ is a non-zero element in $\ZZ[\ve]$. Let $\QQ^n(\D_E)$ be the set of all $\o =(\o_1, \dots, \o_n) \in \QQ^n$ such that $\D_E(\o)\neq 0$. Thus for every $P \in E(K)$, for $\o$ such that $P_\o:=P(\o)$ is defined, the point $P_\o$ is a rational point on the elliptic curve $E_\o/ \QQ$ defined by the Weierstrass equation $$ Y^2=X^3+A(\o)X+B(\o).$$ We denote the canonical height on $E_\o$ by $\hat{h}_{E_\o}$ and the logarithmic height on $\PP^n_\QQ$ by $h$, i.e. $$ h(\o)=\log H(\o):=\log H([1,\o_1,\ldots, \o_n]),$$ where $$H([\nu_0,\ldots, \nu_n])=\max_i \{ |\nu_i| \}
,\ \text{ if } \nu_i \in \ZZ \ \text{ and }\gcd (\nu_0, \ldots, \nu_n)=1.$$ To ease the notation, we will denote
$\displaystyle ||\nu ||:=\max_i \{ |\nu_i| \}$ for any $\nu \in \ZZ^n$. We prove the following theorems about the average value of $ \frac{\hat{h}_{E_\o}(P_\o)}{h(\o)}$.

\begin{theorem} \label{T1}  With notation as above, let
$$\QQ^n_{B}(\D_E):=\{ \o \in \QQ^n \ | \ 1 <H(\o) \leq B  \text{ and }\D_E(\o) \neq 0 \},$$and 
$$E(K)_{nt}:=\{ P\in E(K)\ | \ P \text{ non-torsion } \}.$$
Then there exists an $L_1 >0$ depending only on $\D_E$, such that for all $P$ in $E(K)_{nt}$, let 
$$\QQ^n_{B}(\D_E,P):=\{ \o \in \QQ^n_{B}(\D_E) \ | \ P_\o \text{ is defined }\}, $$
we have
$$\underline{Ah}_E^\QQ(P):= \liminf_{B \rightarrow \infty} \frac{1}{\# \QQ^n_{B}(\D_E,P)}\sum_{\o \in \QQ^n_{B}(\D_E,P)} \frac{\hat{h}_{E_\o}(P_\o)}{h(\o)} \geq L_1. $$
\end{theorem}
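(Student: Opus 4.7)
The plan is to combine a multivariable specialization theorem with a uniform lower bound on the canonical height of non-torsion points over the function field $K$.

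First, one establishes Silverman's specialization theorem in the multivariable setting: for each non-torsion $P \in E(K)$,
\[
\lim_{\substack{H(\o) \to \infty \\ \o \in \QQ^n(\D_E)}} \frac{\hat{h}_{E_\o}(P_\o)}{h(\o)} = \hat{h}_E(P),
\]
where $\hat{h}_E(P)$ denotes the canonical height of $P$ relative to $E/K$. For $n = 1$ this is classical, and the general case can be reduced to it by slicing $\QQ^n$ with generic rational lines (or proved directly). Since $\hat{h}_{E_\o}(P_\o) \geq 0$, splitting the defining average into the tail $H(\o) \leq M$ (whose cardinality is bounded independently of $B$, so contributes $o(1)$ as $B \to \infty$) and the bulk $H(\o) > M$ (where each summand is at least $\hat{h}_E(P) - \e$) yields
\[
\underline{Ah}_E^\QQ(P) \geq \hat{h}_E(P) - \e
\]
for every $\e > 0$, hence $\underline{Ah}_E^\QQ(P) \geq \hat{h}_E(P)$.

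The heart of the argument is then the uniform lower bound
\[
\hat{h}_E(P) \geq L_1 > 0 \quad \text{for all } P \in E(K)_{nt},
\]
with $L_1$ depending only on $\D_E$ — a function-field analog of Lang's conjecture. Because $E/K$ is not split, it extends to an elliptic fibration $\pi \colon \Ecal \to X$, where $X$ is a smooth projective model birational to $\PP^n_\QQ$ whose degenerate fibers lie over components of the zero locus of $\D_E$. A Shioda--Tate-type calculation then realizes $\hat{h}_E$ (up to a positive factor depending on the chosen polarization of $X$) as an intersection pairing on sections of $\pi$, taking values in $\tfrac{1}{N}\ZZ$ for an integer $N$ determined solely by the local contributions at the bad fibers, and hence by $\D_E$ alone. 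The positive-definiteness of this pairing on $E(K)/E(K)_\tors$ forces $\hat{h}_E(P) \geq 1/N$ for all non-torsion $P$, giving the required uniform bound.

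The principal obstacle is producing this $\D_E$-dependent lower bound. Over one-variable function fields it is standard via intersection theory on elliptic surfaces, but for $K = \QQ(T_1, \ldots, T_n)$ with $n \geq 2$ one must choose the compactification $X$ carefully, resolve the indeterminacies of the given Weierstrass model along the exceptional locus, and quantitatively track how $N$ depends on the degrees and singular structure of $\D_E$. An alternative route — applying Silverman's difference bound $|\hat{h}_{E_\o}(P_\o) - \tfrac{1}{2} h(x(P_\o))| = O(h(\D_{E_\o}) + 1)$ and directly averaging $h(x(P_\o))$ using the polynomial structure of $x(P) \in K$ — ultimately demands the same $P$-uniform control on the growth of $h(x(P_\o))$, so one cannot bypass the lower-bound step.
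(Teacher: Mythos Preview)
Your proposal contains a fatal gap at the very first step: the multivariable specialization limit
\[
\lim_{\substack{H(\o) \to \infty \\ \o \in \QQ^n(\D_E)}} \frac{\hat{h}_{E_\o}(P_\o)}{h(\o)} = \hat{h}_E(P)
\]
simply does not exist for $n \geq 2$. The paper's introduction gives an explicit counterexample: for $E/\QQ(S,T): Y^2 = X^3 - S^2X + T^2$ and $P=(S,T)$, restricting $\o$ to the line $S=0$ makes $P_\o$ torsion on $E_\o$, so the quotient tends to $0$; restricting to $S=T$ gives limit $\tfrac{1}{6}$; restricting to $T=1$ gives $\tfrac{1}{2}$. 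Your claimed reduction to the $n=1$ case by ``slicing with generic rational lines'' is exactly what goes wrong: different lines produce different limits, and you cannot control which line a given $\o$ sits on as $H(\o)\to\infty$. This is precisely why the paper studies the \emph{average} rather than the pointwise limit, and why Conjecture~\ref{Conj} is stated as a conjecture.

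The paper's actual argument is entirely different. It never invokes $\hat{h}_E(P)$ at all. Instead, for each specialization $\o$ it uses Silverman's weak form of Lang's height conjecture (Lemma~\ref{lowerB KN}) to bound $\hat{h}_{E_\o}(P_\o)$ from below by a constant times $\log|\D_{E_\o}^{\min}|$, provided $\D_E(\o)$ is $k$-free. The work then goes into showing, via the combinatorial Lemmas \ref{Mason Co}, \ref{k-free}, and \ref{Good density}, that a positive density of $\o$ (depending only on $\D_E$) satisfy both the $k$-free condition and $\log|\D_{E_\o}^{\min}| \gg h(\o)$. This is first carried out over $\ZZ^n$ (Proposition~\ref{p1}) and then lifted to $\QQ^n$ by homogenizing $\D_E$ and running a M\"obius inclusion--exclusion over $\gcd(\nu_0,\ldots,\nu_n)$. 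The uniformity in $P$ comes for free because Silverman's lower bound applies to \emph{every} non-torsion point on $E_\o(\QQ)$; Mazur's theorem is used only to show that the set of $\o$ where $P_\o$ becomes torsion has density zero. Your proposed Shioda--Tate lower bound on $\hat{h}_E(P)$, even if it could be made to work for $n\ge 2$, would not by itself salvage the argument without the nonexistent limit in step one.
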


When $n=1$, Silverman proved in \cite{SHS} that $$ \lim_{\substack{ \o \in \bar{\QQ}^n \\ h(\o) \rightarrow \infty}}\frac{\hat{h}_{E_\o}(P_\o)}{h(\o)}=\hat{h}_E(P),$$ where $\hat{h}_E(P)$ is the canonical height of $P$ in $E/K$. One would like to obtain a similar result for general $n$ but by a simple observation this limit cannot exist for $n \geq 2$. This is because we can restrict the $\o$ to lie on a particular algebraic curve $\g$ for $h(\o)$ tends to infinity, reducing this to the case of $n=1$, but now the limit obtained will depend on $P_\g$ and the elliptic curve $E_\g$ in which it lies. For illustration, consider the elliptic curve $$E/\QQ(S,T) \ : \ Y^2=X^3-S^2X+T^2$$ and  $P=(S,T) \in E(\QQ(S,T))$. If we restrict $\o$ to $\g: \ S=0$, a simple calculation shows that $P_\g=(0,T)$ is a torsion point on $E_\g(\QQ(T)): \ Y^2=X^3+T^2.$ Thus the limit of $ \frac{\hat{h}_{E_\o}(P_\o)}{h(\o)}$ is zero when $h(\o)$ tends to infinity by restriciting $\o \in \g$. On the other hand, if we restrict $\o$ on the curve $\g': \ S=T$, $P_{\g'}=(T,T)$ is in a basis of $E_{\g'}(\QQ(T)) : \ Y^2=X^3-T^2X+T^2$ (this is an example given in \cite{SDS}). Thus Silverman's theorem implies a non-zero limit of the quotient when $h(\o)$ tends to infinity by restriciting $\o \in \g'$. In fact this limit is $\frac{1}{6}$. One can also look at the restriction $T=1$ (resp. $S=1$) and get the limit of the quotient equal to $\frac{1}{2}$ (resp. $\frac{1}{3}$).
 
Since the limit of the quotient $ \frac{\hat{h}_{E_\o}(P_\o)}{h(\o)}$ fails to exist in general for $n \geq 2$, we turn our attention to look at the average of the quotient:
$$  Ah^\QQ_E(P)_B:=\frac{1}{\# \QQ^n_{B}(\D_E,P)}\sum_{\o \in \QQ^n_{B}(\D_E,P)} \frac{\hat{h}_{E_\o}(P_\o)}{h(\o)}.$$
Following the idea of Silverman, we would like make the following conjecture:

\begin{conjecture}\label{Conj} With the same setting as Theorem \ref{T1}, for any $P\in E(K)$,  $$\lim_{B \rightarrow \infty}Ah^\QQ_E(P)_B = \hat{h}_E(P).$$
\end{conjecture}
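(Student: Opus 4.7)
The plan is to establish the exact limit by using a local--global decomposition of the canonical height and computing each local average explicitly. On the specialized side, write the N\'eron decomposition
$$\hat h_{E_\o}(P_\o) = \sum_{v \in M_\QQ} \hat\l_{E_\o,v}(P_\o),$$
and on the generic fiber
$$\hat h_E(P) = \sum_{D} n_D\, \hat\l_{E,D}(P),$$
where $D$ runs over prime divisors on a suitable arithmetic model of $E$ over $\Spec \ZZ[\ve]$ and $n_D$ are intersection-theoretic multiplicities coming from the polarization class of $\PP^n_\QQ$. The goal is to prove that, after averaging over $\o \in \QQ^n_B(\D_E,P)$ and dividing by $h(\o) \sim \log B$, the left-hand sum converges termwise to the right-hand total.

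I would tackle the non-archimedean places first. For each fixed rational prime $p$ outside a bad set depending on $A, B$, the local height $\hat\l_{E_\o,p}(P_\o)$ is a finite-level function of $\o$ modulo a fixed power $p^k$, expressible through the Kodaira type of $E_\o$ at $p$ and the component to which $P_\o$ reduces. Since rational $\o$ of height $\leq B$ equidistribute modulo $p^k$ as $B\to\infty$, a standard equidistribution argument computes the average, and a careful bookkeeping on the N\'eron model should identify it with the contribution of the vertical divisors of $K$ above $p$ to $\hat h_E(P) \cdot h(\o)$. For the archimedean place, one combines real equidistribution of rational points in the unit cube with the explicit formula for the archimedean Green's function on $E_\o$, so that the archimedean average converges to the integral of the generic archimedean local height against the natural measure on $\PP^n(\RR)$; this should match the horizontal divisor contribution to $\hat h_E(P)$. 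The example from the introduction --- restricting to the three lines $S=0$, $T=0$, $S=T$ and getting pairwise different limits --- is fully compatible with this picture, since the conjectural average reflects the full ambient decomposition on $\PP^n$, not any single pencil.

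The main obstacle is uniformity in the place $v$. Two error terms must be shown to contribute $o(\log B)$ on average: tail contributions from primes $p$ large relative to $B$, and contributions from $\o$ for which $\D_E(\o)$ is highly divisible at a fixed prime. This requires a uniform sieve bound of the form $\#\{\o\in\QQ^n_B(\D_E) : v_p(\D_E(\o)) \geq k\} \ll B^{n+1}/p^{k}$ matching the natural density, together with a uniform bound on $\hat\l_{E_\o,p}(P_\o)$ of the shape $O\bigl(\log|\D_E(\o)|_p\bigr)$. The deepest issue is justifying the Fubini-type interchange needed to sum averaged local heights over all places, and matching the resulting sum with $\hat h_E(P)$ --- this is the higher-dimensional analogue of the uniformity underlying Silverman's proof in the $n=1$ case, and closing this gap (rather than merely recovering the positive lower bound of Theorem \ref{T1}) is where genuinely new input beyond the techniques of Theorem \ref{T1} appears to be required.
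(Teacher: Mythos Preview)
This statement is a \emph{conjecture} in the paper, not a theorem: the author explicitly writes that ``proving this conjecture for $n \geq 2$ appears to be difficult'' and offers no proof. The only case settled in the paper is $n=1$, which is noted to follow trivially from Silverman's specialization theorem combined with the Ces\`aro mean theorem. Everything else in the paper (Theorem~\ref{T1}, Proposition~\ref{p1}, Theorem~\ref{Up}) is aimed at showing that the conjecture is \emph{consistent} --- the liminf is bounded below by a positive constant for non-torsion $P$, and the limsup is finite --- not at establishing the exact limit $\hat h_E(P)$. So there is no ``paper's proof'' to compare your proposal against.

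As for the proposal itself: it is a proof \emph{strategy}, not a proof, and you acknowledge this. The local decomposition of $\hat h_{E_\o}$ and the equidistribution heuristic for non-archimedean places are natural first moves, and the obstacles you flag --- uniformity in $v$, tail control over large primes, and the Fubini-type interchange --- are exactly the genuine difficulties. But none of these are resolved in your write-up; in particular, the claim that the averaged non-archimedean contributions match the vertical-divisor contributions to $\hat h_E(P)\cdot h(\o)$ is asserted (``a careful bookkeeping \ldots should identify it'') rather than carried out, and the archimedean matching is similarly left at the level of ``this should match.'' The paper's own results already use nontrivial sieve input (Lemmas~\ref{Np2 nonsq}--\ref{Good density}, Heath-Brown's determinant method) merely to get a positive lower bound, which suggests the uniform tail estimates you would need for the exact limit are substantially harder. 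In short: your outline is a plausible line of attack on an open problem, but it does not constitute a proof, and the paper does not claim one either.
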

\noindent The case $n=1$ for this conjecture is true, which follows trivially from Silverman's theorem and Ces\`aro mean theorem. However, proving this conjecture for $n \geq 2$ appears to be difficult, so we first check whether the conjecture even makes sense, i.e., if the limit of the average exists as a function of $P \in E(K)$, does it satisfy the properties of canonical height function ( \cite{SAE} Chapter VIII, Theorem 9.3 or \cite{LFD} Chapter 5)?

One such property is that $\hat{h}_E$ is a quadratic form. By linearity of average, it's straightforward that the limit of $Ah^\QQ_E(-)_B$, if it exists, is a quadratic form too. Another important propety of the canonical height on $E/K$ is that 
$\hat{h}_E(P)=0$ if and only if $P$ is in the subgroup generated by torsion points and the image of $K/\QQ$-trace of $E$ (\cite{LFD} Chapter 6, Theorem 5.4). Since we assume $E$ is not split over $K$, then the $K/\QQ$-trace is of dimension zero, which means it's the trivial group and hence its image in $E$ is the identity (\cite{CCK} Example 2.2). In other words, if $E$ is not split over $K$, then 
\begin{equation}
\hat{h}_E(P)=0 \quad \text{ if and only if } \quad P \text{ is a torsion point.} \label{torsion zero}
\end{equation}

So we investigate property (\ref{torsion zero}) for the limit inferior of $Ah^\QQ_E(P)_B$. We shall prove that the limit inferior of $Ah^\QQ_E(P)_B$ is zero if and only if $P$ is a torsion point of $E(K)$. The if part is trivial as if $P$ is a torsion point of $E(K)$, then $P_\o$ is a torsion point of $E_\o(\QQ)$ and so the average is always zero. It turns out the other direction is also true. We will first prove this by looking at the average over $\ZZ^n$, which is Proposition \ref{p1}. 

\begin{proposition}\label{p1} With notation as above, we further let 
$$\ZZ^n_{B}(\D_E):=\{ \nu \in \ZZ^n \ | \ 1 < ||\nu||\leq B  \text{ and }\D_E(\nu) \neq 0 \}.$$Then there exists an $L_2 >0$ depending only on $\D_E$, such that for all $P$ in $E(K)_{nt}$, let $$\ZZ^n_{B}(\D_E,P):=\{ \nu \in \ZZ^n_{B}(\D_E) \ | \ P_\nu \text{ is defined }\}, $$we have
$$\underline{Ah}_E^\ZZ(P):= \liminf_{B \rightarrow \infty} \frac{1}{(2B)^n}\sum_{\nu \in \ZZ^n_{B}(\D_E,P)} \frac{\hat{h}_{E_\nu}(P_\nu)}{h(\nu)} \geq L_2. $$
\end{proposition}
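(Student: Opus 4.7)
The plan is to reduce Proposition \ref{p1} to Silverman's $n=1$ specialization theorem via a slicing argument. For each $\bfmu\in\ZZ^{n-1}$, write $E^\bfmu$ for the specialization at $(T_1,\ldots,T_{n-1})=\bfmu$ (an elliptic curve over $\QQ(T_n)$) and $P^\bfmu\in E^\bfmu(\QQ(T_n))$ for the corresponding restricted section. Because $E/K$ is non-split and $P$ is non-torsion in $E(K)$, a standard open-set argument produces a proper Zariski-closed $Z_P\subsetneq\AA^{n-1}$ outside of which $E^\bfmu$ remains non-split over $\QQ(T_n)$ and $P^\bfmu$ remains non-torsion; the integer points of $Z_P$ in $[-B,B]^{n-1}$ form a set of density $o(1)$.

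\textbf{Per-slice asymptotics.} For each good $\bfmu\notin Z_P$, Silverman's one-variable theorem yields
\[
\frac{\hhat_{E_{(\bfmu,t)}}(P_{(\bfmu,t)})}{h(t)}\longrightarrow \hhat_{E^\bfmu}(P^\bfmu)>0 \quad \text{as } |t|\to\infty.
\]
Combined with the elementary estimate $h((\bfmu,t))=h(t)+o(h(t))$ for fixed $\bfmu$ as $|t|\to\infty$, and a Ces\`aro averaging along the line, this gives $\liminf_{B}\frac{1}{2B}\sum_{|t|\le B}\hhat_{E_{(\bfmu,t)}}(P_{(\bfmu,t)})/h((\bfmu,t))\ge \hhat_{E^\bfmu}(P^\bfmu)$ for each good $\bfmu$. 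The boundary contribution from $\bfmu$ with $\|\bfmu\|\sim B$, where $h((\bfmu,t))$ is no longer dominated by $h(t)$, is handled by decomposing $[-B,B]^n$ into the $n$ regions $R_i=\{\nu:|\nu_i|=\|\nu\|\}$ and slicing each $R_i$ along its dominant coordinate direction $T_i$.

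\textbf{Aggregation and the main obstacle.} Using Fatou to pass the liminf into the outer sum over $\bfmu$, the problem reduces to a bound of the shape
\[
\liminf_{B\to\infty}\frac{1}{(2B)^{n-1}}\sum_{\bfmu\notin Z_P,\,\|\bfmu\|\le B}\hhat_{E^\bfmu}(P^\bfmu)\ \ge\ L_2.
\]
By Lang--N\'eron each individual $\hhat_{E^\bfmu}(P^\bfmu)$ is positive, but the main obstacle is to establish a uniform positive lower bound $\hhat_{E^\bfmu}(P^\bfmu)\ge L_2$ depending only on $\Delta_E$, i.e.\ uniform in both $P$ and $\bfmu$. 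A natural route is via an explicit Szpiro-type lower bound for the canonical height on the one-parameter elliptic families $E^\bfmu/\QQ(T_n)$, exploiting that the discriminant of $E^\bfmu$ divides a specialization of $\Delta_E$ and hence has arithmetic complexity controlled by $\Delta_E$ alone. Producing this uniform gap estimate is where the bulk of the technical work lies.
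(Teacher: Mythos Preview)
Your slicing-plus-Fatou strategy has a genuine gap at the aggregation step. After decomposing into the regions $R_i$ (which is fine), the outer average over $\bfmu\in\ZZ^{n-1}$ is still normalized by $(2B)^{n-1}$, so essentially all of its mass lies on $\bfmu$ with $\|\bfmu\|$ of order $B$. For such $\bfmu$ the inner per-slice Ces\`aro average is \emph{not} close to $\hhat_{E^{\bfmu}}(P^{\bfmu})$: Tate's one-parameter estimate reads $\hhat_{E^{\bfmu}_t}(P^{\bfmu}_t)=\hhat_{E^{\bfmu}}(P^{\bfmu})\log|t|+O_{\bfmu}(1)$, and the implied constant depends on the coefficients of $A(\bfmu,T_n)$, $B(\bfmu,T_n)$ and on $P^{\bfmu}$, hence grows with $\|\bfmu\|$. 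When $\|\bfmu\|\asymp B$ this error is of the same order as $\log|t|\le\log B$, so the slice limit has not kicked in. Truncating the outer sum to $\|\bfmu\|\le B^{\alpha}$ with $\alpha<1$ to force uniform inner convergence makes the truncated outer density $B^{-(1-\alpha)(n-1)}\to 0$. So the reduction to $\hhat_{E^{\bfmu}}(P^{\bfmu})$ fails exactly on the $\bfmu$ that carry the weight; Fatou cannot rescue this because both the index set and the summands depend on $B$.

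Separately, the ``main obstacle'' you flag --- a uniform bound $\hhat_{E^{\bfmu}}(P^{\bfmu})\ge L_2$ depending only on $\D_E$ --- is stated but not proved. A function-field Lang-type inequality gives $\hhat_{E^{\bfmu}}(P^{\bfmu})\gg \deg_{T_n}\D_{E^{\bfmu}}^{\min}$, but you must still show that $\deg_{T_n}\D_{E^{\bfmu}}^{\min}$ does not collapse, that $j(E^{\bfmu})$ stays non-constant, and that $P^{\bfmu}$ stays non-torsion, outside a density-zero set of $\bfmu$; this is a multivariate density statement of the same flavour you set out to avoid.

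The paper does not slice at all. It works pointwise at each $\nu\in\ZZ^n$, using Silverman's proven weak form of Lang's height conjecture over $\QQ$: for non-torsion $q\in E_\nu(\QQ)$ one has $\hhat_{E_\nu}(q)\ge (C_1/N_k^2)\log|\D_{E_\nu}^{\min}|$ whenever $\D_E(\nu)$ is $k$-free (Kodaira--N\'eron controls the component groups). Writing $\D_E(\ve)=\beta F(\ve)^{a+12b}$ with $F$ primitive, non-power, and $1\le a\le 11$ (Mason--Stothers excludes $a=0$), one bounds $|\D_{E_\nu}^{\min}|$ below by the squarefree (or cubefree, if $a\in\{4,8\}$) part of $F(\nu)$; a Heath-Brown determinant-method count then shows $(\sqfr F(\nu))^M>\|\nu\|$ for asymptotically all $\nu$, forcing $\log|\D_{E_\nu}^{\min}|\ge \tfrac{1}{M}\log\|\nu\|+O_E(1)$ on a set of density at least $c_{k,\D_E}>0$. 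Combined with Mazur's torsion bound this gives the uniform $L_2$ directly, with no use of the $n=1$ specialization theorem.
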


Proposition \ref{p1} is the key tool used to prove Theorem \ref{T1} via a standard inclusion-exclusion argument. Notice that Proposition \ref{p1} and Theorem \ref{T1} state something stronger: there exists a uniform non-zero lower bound of $\underline{Ah}_E^\ZZ(-)$ and $\underline{Ah}_E^\QQ(-)$ for all non-torsion $P$ in $E(K)$. One might think that the uniform lower bound is expected once we proved that $\underline{Ah}_E^\ZZ(P)>0$ and $\underline{Ah}_E^\QQ(P)>0$ for $P$ in $E(K)_{nt}$, due to the fact that $E(K)_{nt}$ is finitely generated and $\hat{h}_{E_\o}$ can be extended to a positive definite quadratic form on $E_\o(\QQ)\otimes_\ZZ \RR$. At the level of $E_\o(\QQ)$, one can get a uniform lower bound of $\hat{h}_{E_\o}$ on the lattice $E_\o(\QQ)_{nt}\subset E_\o(\QQ)\otimes_\ZZ \RR$ in terms of the canonical height of a nice basis of $E_\o(\QQ)_{nt}$. (\cite{LFD} Chapter 5, Theorem 7.7 and Corollary 7.9). However, at the average level, it is not obvious at all whether one can find a basis $\{P_i\}_{i\in I}$ of $E(K)_{nt}$ such that the specialization $\{P_i(\o)\}_{i\in I}$ is always a nice basis in the image of specialization $(E(K)_{nt})_\o \subseteq E_\o(\QQ)_{nt}$ for all $\o$. Our proofs produce the uniform lower bounds without exploiting these facts. 

We will postpone the proofs of Proposition \ref{p1} and Theorem \ref{T1} to Section \ref{Proof of p1} and \ref{Proof of T1} respectively. On the other side, we also prove that the limit superior of $Ah^\QQ_E(P)_B$ is finite.

\begin{theorem} \label{Up}With the same hypothesis as in Theorem \ref{T1} and for any $P \in E(K)$, there exists a constant $U_{P}$ depending only on $P$, such that
$$ \hat{h}_{E_\o}(P_\o) \leq U_P (1+h(\o))$$
for all $\o \in \QQ^n_{B}(\D_E,P)$. Consequently, we have
$$\overline{Ah}_E^\QQ(P):= \limsup_{B \rightarrow \infty} \frac{1}{\# \QQ^n_{B}(\D_E,P)}\sum_{\o \in \QQ^n_{B}(\D_E,P)} \frac{\hat{h}_{E_\o}(P_\o)}{h(\o)} < \infty. $$
\end{theorem}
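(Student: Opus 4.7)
The plan is to prove the pointwise upper bound $\hhat_{E_\o}(P_\o) \leq U_P(1+h(\o))$ using standard height-comparison machinery, and then to deduce the $\limsup$ statement by averaging.

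First, I will invoke the classical upper bound comparing the canonical height with the naive $x$-coordinate height on an elliptic curve over $\QQ$: for $E_\o : Y^2 = X^3 + A(\o)X + B(\o)$ with $\D_E(\o) \neq 0$, there is a constant $C(E_\o)$ such that
$$\hhat_{E_\o}(Q) \leq \tfrac{1}{2} h\bigl(x(Q)\bigr) + C(E_\o)$$
for every $Q \in E_\o(\QQ)$. The standard telescoping proof, applied to the degree-$4$ map $x(Q) \mapsto x([2]Q)$, shows that $C(E_\o)$ may be taken linear in $h(A(\o))$ and $h(B(\o))$. Since $A(\ve), B(\ve) \in \ZZ[\ve]$ are fixed polynomials, functoriality of heights under polynomial evaluation gives $h(A(\o)), h(B(\o)) \leq c_1 h(\o) + c_2$, hence $C(E_\o) \leq c_3 h(\o) + c_4$ for constants depending only on $E/K$.

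Next, the fixed point $P \in E(K)$ has $x$-coordinate in $K = \QQ(\ve)$, which I write as $x(P) = f(\ve)/g(\ve)$ with coprime $f, g \in \ZZ[\ve]$. For $\o \in \QQ^n_{B}(\D_E,P)$ we have $g(\o) \neq 0$ by definition, so functoriality of heights under the rational map $\PP^n \to \PP^1$ defined by $f$ and $g$ yields
$$h\bigl(x(P_\o)\bigr) \leq d_P \cdot h(\o) + c_5(P),$$
where $d_P = \max(\deg f, \deg g)$ and $c_5(P)$ depends only on the coefficients of $f,g$. Combining with the first step produces a constant $U_P$ depending only on $P$ (and on the fixed data $A, B$) such that $\hhat_{E_\o}(P_\o) \leq U_P\bigl(1 + h(\o)\bigr)$ for every admissible $\o$.

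For the stated consequence, observe that $H(\o) > 1$ forces $H(\o) \geq 2$, since the coprime-integer representative $[\nu_0, \ldots, \nu_n]$ has $\|\nu\| \in \ZZ_{\geq 1}$. Therefore $h(\o) \geq \log 2$, and dividing the pointwise bound through by $h(\o)$ gives
$$\frac{\hhat_{E_\o}(P_\o)}{h(\o)} \leq U_P\Bigl(1 + \tfrac{1}{\log 2}\Bigr),$$
a constant independent of $\o$ and $B$; averaging a uniformly bounded quantity produces a uniformly bounded average, so $\overline{Ah}_E^\QQ(P) < \infty$. The main obstacle is tracking the constants in the first step carefully: one must use a version of the height-difference inequality whose error term depends polynomially on $h(A(\o)), h(B(\o))$, rather than the weaker ``$O(1)$ depending on the Weierstrass equation'' formulation, so that under specialization the bound pulls back to $O(h(\o))$ and not something larger.
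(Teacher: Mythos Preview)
Your proposal is correct and follows essentially the same route as the paper: the paper carries out the Tate telescoping argument explicitly to show $\hhat_{E_\o}(p)-h_{E_\o}(p)\le \tfrac{c_1}{3}h(\o)+\tfrac{c_2}{3}$ via the duplication formula, then bounds $h_{E_\o}(P_\o)=h([x(\o),1])$ by $dh(\o)+c_3$ using the rational map $\psi_P:\PP^n\dashrightarrow\PP^1$, exactly as you outline. Your closing remark about needing the error term to be explicitly $O(h(\o))$ rather than an unspecified $O_{E_\o}(1)$ is precisely the point the paper makes at the start of Section~\ref{proof of Up} when explaining why Theorem~\ref{T2} does not immediately imply Theorem~\ref{Up}.
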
In fact Theorem \ref{Up} is true in a more general setting as stated in the following theorem:

\begin{theorem}\label{T2} Let $k$ be a number field, let $S$ and $A$ be nonsingular, irreducible, projective varieties defined over $k$, and let $\pi : A \rightarrow S$ be a flat morphism defined over $k$ so that the generic fiber $A_\eta$ of $\pi$ is an abelian variety over $k(S)$. Let
$$S^0:=\{\o \in S(k) \ | \  A_\o \text{ is a non-singular abelian variety defined over }k \}. $$
Fix a divisor $D \in \text{Div}_{\overline{k}}(A)$. For each $\o\in S^0$, let $D_\o \in \text{Div}(A_\o)$ be any divisor in the restriction of the divisor class of $D$ to $A_\o$ and the corresponding canonical height be $\hat{h}_{A_\o,D_\o}$. Fix a projective embedding $i : S \subset \PP^n$,
then for any $P \in A_\eta(k(S))$, there exists a constant $c_0$ depending on $h_{A,D}$, $D$, $i$ and $P$ such that
$$ \hat{h}_{A_\o,D_\o}(P_\o) < c_0(1+h(i(\o)))$$ for all $\o \in S^0$ with $P_\o$ is defined. As a consequence, if we let
$$S^0_B(P):=\{\o \in S^0 \ | \ 1<H(i(\o))<B, \ P \text{ is defined.} \}, $$ then
$$\overline{Ah}^\QQ_{A_\eta}(P):= \limsup_{B \rightarrow \infty} \frac{1}{\# S^0_B(P)}\sum_{\o \in S^0_B(P)} \frac{\hat{h}_{A_\o,D_\o}(P_\o)}{h(i(\o))} < \infty. $$
\end{theorem}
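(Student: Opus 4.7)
The plan is to decompose $\hat{h}_{A_\o, D_\o}(P_\o)$ into two contributions, each at most linear in $h(i(\o))$: the ambient Weil height $h_{A,D}(P_\o)$, controlled by functoriality, and the fiberwise discrepancy $\hat{h}_{A_\o, D_\o}(P_\o) - h_{A,D}(P_\o)$, controlled by a uniform Tate telescope on $A$.

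For the first contribution, $P \in A_\eta(k(S))$ corresponds to a rational section $\sigma_P : S \dashrightarrow A$ of $\pi$, defined precisely where $P_\o$ is defined. Because $S$ is smooth and projective, the indeterminacy locus of $\sigma_P$ has codimension at least two, so the Cartier divisor $\sigma_P^* D$ extends uniquely to all of $S$. Functoriality of Weil heights gives
$$h_{A,D}(P_\o) = h_{S,\, \sigma_P^* D}(\o) + O_P(1),$$
and comparing $\sigma_P^* D$ with a sufficiently large multiple of the very ample class $i^* \Ocal_{\PP^n}(1)$ via the height machine yields $h_{S,\, \sigma_P^* D}(\o) \leq c_1(1 + h(i(\o)))$, so $h_{A,D}(P_\o) \leq c_2(1 + h(i(\o)))$.

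For the second contribution, decompose $D$ into symmetric and antisymmetric parts $D^\pm$ (replacing $D$ by $2D$ if necessary to clear denominators). The Mumford formulas $[2]^* D_\o^+ \sim 4 D_\o^+$ and $[2]^* D_\o^- \sim 2 D_\o^-$ hold on every fiber, so on the total space each class $[2]^* D^\pm - r^\pm D^\pm$ (with $r^+ = 4$, $r^- = 2$) restricts trivially to $\Pic(A_\o)$ for $\o \in S^0$, and by the seesaw principle is linearly equivalent to $\pi^* E^\pm$ for some $E^\pm \in \Div(S)$. The height machine converts this into a recursion
$$h_{A, D^\pm}([2]Q) = r^\pm\, h_{A, D^\pm}(Q) + h_{S, E^\pm}(\pi(Q)) + O(1)$$
valid on a dense open of $A$ with $O(1)$ uniform in $Q$. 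Iterating on points of $A_\o$ and taking the Tate-style telescoping limit --- the $\o$-dependent but $Q$-uniform discrepancy between $h_{A, D^\pm}|_{A_\o}$ and $h_{A_\o, D_\o^\pm}$ gets divided by $(r^\pm)^n$ and vanishes in the limit --- yields the uniform identity
$$\hat{h}_{A_\o, D_\o^\pm}(Q) = h_{A, D^\pm}(Q) + \alpha^\pm\, h_{S, E^\pm}(\o) + O(1), \qquad \alpha^+ = \tfrac{1}{3},\ \alpha^- = 1.$$
Summing over $\pm$, specializing $Q = P_\o$, and combining with the first step produces $\hat{h}_{A_\o, D_\o}(P_\o) \leq c_0(1 + h(i(\o)))$. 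The statement about $\overline{Ah}^\QQ_{A_\eta}(P)$ then follows because on $\{\o : H(i(\o)) > 1\}$ the quantity $h(i(\o))$ is bounded below by a positive constant by Northcott's theorem, making $\hat{h}_{A_\o, D_\o}(P_\o)/h(i(\o))$ uniformly bounded above.

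The hard part will be ensuring genuine uniformity of the $O(1)$ error in the telescope across all fibers, as opposed to obtaining bounds that depend on $\o$ through the fiber's geometry. Concretely, this requires establishing the recursion on a single dense open $V \subseteq A$ that meets every smooth fiber $A_\o$ and is, up to negligible exceptions, stable under $[2]$; for the rare $\o$ where $P_\o$ lies outside this open, one can use the scaling relations $\hat{h}_{A_\o, D_\o^+}([m]Q) = m^2 \hat{h}_{A_\o, D_\o^+}(Q)$ and $\hat{h}_{A_\o, D_\o^-}([m]Q) = m\, \hat{h}_{A_\o, D_\o^-}(Q)$ to deduce the bound for $P_\o$ from a bound on $[m]P_\o$ for a suitable $m$.
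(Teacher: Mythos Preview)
Your two-step decomposition is exactly the paper's strategy: bound $h_{A,D}(P_\o)$ via the rational section $\sigma_P$ and functoriality of heights, then bound the discrepancy $\hat h_{A_\o,D_\o}(P_\o)-h_{A,D}(P_\o)$ linearly in $h(i(\o))$, and finish with Northcott to get a positive lower bound on $h(i(\o))$. The only difference is that the paper treats the discrepancy bound as a black box---it is precisely Theorem~A of Silverman \cite{SHS}---whereas you reprove it via seesaw and the Tate telescope; your sketch is correct and is essentially Silverman's own proof of that theorem, so you have done more work than strictly necessary but nothing wrong.
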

Theorem \ref{T2} is easier to prove than Theorem \ref{T1}, so we will prove this theorem first in Section \ref{proof of T2}. After that we will prove Theorem \ref{Up} in Section \ref{proof of Up} by a similar fashion. 

The behavior of $\hat{h}_{E_\o}(P_\o)$ for $n=1$ is well studied in the literature in a more general setting of an abelian variety $A$ defined over a function field $k(C)$ of a non-singular projective curve $C$ over a number field $k$. In fact this is the original setting in \cite{SHS} where Silverman proved
$$ \lim_{\substack{ t \in C(\bar{k}) \\ h(t) \rightarrow \infty}}\frac{\hat{h}_{A_t}(P_t)}{h(t)}=\hat{h}_A(P).$$
For the special case where $A=E$ is an elliptic surface, Tate \cite{TVC} obtained a stronger result by showing that 
$$\hat{h}_{E_t}(P_t)= \hat{h}_E(P)h(t)+O_P(\sqrt{h(t)}+1)$$ and if $C=\PP^1$, the error is only $O_P(1)$. This stronger result was extended to the general case of abelian varieties by Lang (\cite{LFD} Chapter 12, Section 5) under the assumption that the N\'eron model of the generic fiber has a good completion. In \cite{CVL}, Call reproved Lang's result using a theorem on canonical heights and further discussed cases where the good completion assumption may be weakened or eliminated. Readers can consult Chapter III of \cite{SAT} for a nice introduction and other results on elliptic surfaces. 

Although the behavior of $\hat{h}_{E_\o}(P_\o)$ for $n\geq 2$ is not yet well studied in the literature, we know something about the density of $\o$ such that $\hat{h}_{E_\o}(P_\o)=0$, i.e. $P_\o$ is torsion. Again, this is known in the setting of an abelian variety $A$ defined over a function field $k(V)$ of a variety $V$ over a number field $k$. In \cite{MSF}, Masser proved that for a finitely generated subgroup $\G$ of $A$ the specialization homomorphism 
$$\s_\o : \G \rightarrow A_\o(k(\o)) $$ is injective ``almost always'' for $\o \in V(\bar{k})$. 

%%%%%%%%%%%%%%%%%%%%%%%%%%%%%%%%%%%%%%%%%%%%%%%%%%%%%%%%%%%%%%%%%%%%%%
\section{Proof of Theorem \ref{T2}} \label{proof of T2}
%%%%%%%%%%%%%%%%%%%%%%%%%%%%%%%%%%%%%%%%%%%%%%%%%%%%%%%%%%%%%%%%%%%%%%

Notice that it suffices to prove that the quotient $ \frac{\hat{h}_{A_\o,D_\o}(P_\o)}{h(i(\o))}$ is bounded above uniformly for all $\o \in S^0_B(P)$. This is an immediate consequence of Theorem A of \cite{SHS}, due to Silverman and Tate. In effect, with the given hypothesis in Theorem \ref{T2} and further let $h_{A,D}$ be the Weil height (defined up to equivalence) corresponding to $D$, Theorem A says that there exists a constant $c$ depending on $D$ and $A$, so that for all $P \in A_\eta(K)$
$$\left|\hat{h}_{A_\o,D_\o}(P_\o)-h_{A,D}(P_\o)\right|< c h(i(\o))+O(1), $$ where $O(1)$ depends on the choice of particular Weil heights $h_{A,D}$ and the embedding $i$. So we turn the problem into estimating $h_{A,D}(P_\o)$.

We remind the reader about the definition of $h_{A,D}$. If $D \in \text{Div}_{\overline{k}}(A)$ is very ample, then choose an embedding 
$$ \phi_D : A \rightarrow \PP^m_{\overline{k}} $$ correspnding to the linear system $|D|$ and $h_{A,D}$ is defined by 
\begin{align*}
h_{A,D} : A(\overline{k}) &\longrightarrow \RR \\
p &\longmapsto h(\phi_D(p)).
\end{align*}
For a general divisor $D \in \text{Div}_{\overline{k}}(A)$, write $D=X-Y$, where $X,Y\in \text{Div}_{\overline{k}}(A)$ are very ample divisors, and define
$$h_{A,D}(p):=h_{A,X}(p)-h_{A,Y}(p). $$
For any $P \in A_\eta(K)$, it defines a rational map% to $\pi$
\begin{align*}
\psi_P: S &\DashedArrow[->,]  A \\
\o &\longmapsto P_\o.
\end{align*}
So we have
\begin{align*}h_{A,D}(P_\o)&= h_{A,X}(P_\o)-h_{A,Y}(P_\o)\\
&=h(\phi_X(\psi_P(\o)))-h(\phi_Y(\psi_P(\o)))\\
&\leq h(\phi_X(\psi_P(\o))),
\end{align*} where $f_X:=\phi_X\circ\psi_P$ is a rational map from $S$ to $\PP^m$. By using the triangle inequality of absolute values of $k$, one can show the following standard property of height on projective space (\cite{LFD} Chapter 4, Lemma 1.6):
$$ h(f_X(\o))\leq d h (i(\o))+c_1$$ for some constant $c_1$ and $d$ that depend on $f_X$ only. 
Finally, by applying Theorem A, we get
\begin{align*}
\hat{h}_{A_\o,D_\o}(P_\o)&\leq h_{A,D}(P_\o) + c h(i(\o))+O(1)\\
&\leq d h (i(\o))+c_1 + c h(i(\o))+O(1),
\end{align*}
which is the first part of the theorem. Since the set of points of bounded height in $\PP^n(k)$ is finite, there's a non-zero lower bound (which depends on $k$) for $h(i(\o))>0$. We obtain our desired uniform upper bound for $\frac{\hat{h}_{A_\o,D_\o}(P_\o)}{h(i(\o))}$ by dividing the inequality above by $h(i(\o))>0$ and hence proved the second part of the theorem.

%%%%%%%%%%%%%%%%%%%%%%%%%%%%%%%%%%%%%%%%%%%%%%%%%%%%%%%%%%%%%%%%%%%%%%
\section{Proof of Theorem \ref{Up}} \label{proof of Up}
%%%%%%%%%%%%%%%%%%%%%%%%%%%%%%%%%%%%%%%%%%%%%%%%%%%%%%%%%%%%%%%%%%%%%%
We remark that Theorem \ref{Up} doesn't follow trivially from Theorem \ref{T2} even if we can find a nonsingular irreducible projective variety $\mathcal{E}/\QQ$ and a flat morphsim $\pi : \mathcal{E} \longrightarrow \PP^n$ with generic fiber $\mathcal{E}_\eta$ isomorphic to $E/K$. This is because it is not true in general that we can find a divisor $D \in \Div_{\overline{\QQ}}(\mathcal{E})$ such that 
$$h_{\mathcal{E},D}(p)=h_{\PP^1}([x(p),1]) +O(1),$$ due to the fact that the $X$-coordinate map 
\begin{align*}
\phi: \mathcal{E} &\DashedArrow[->,]  \PP^1 \\
p &\longmapsto [x(p),1]
\end{align*}is just a rational map in general. By mimicking the idea of the proof of Theorem A in \cite{SHS}, one can overcome this by blowing up $\mathcal{E}$ and extending $\phi$ to a morphism. However, we found a more direct and elementary proof for Theorem \ref{Up}, which is the one that we are going to present. 

Using just the definition of height on elliptic curves and triangle inequality of absolute values of $\QQ$, we first prove that there exist positive constants $c_1, c_2$ such that for all $\o \in \QQ^n(\D_E)$ and all $p \in E_\o(\QQ)$, we have
\begin{equation} h_{E_\o}([2]p)-4h_{E_\o}(p) \leq c_1h(\o)+c_2. \label{ATA} \end{equation}
Recall that $E_\o$ is defined by the Weierstrass equation:
$$ Y^2=X^3+A(\o)X+B(\o).$$ For any $p=(x,y) \in E_\o(\QQ)$, we may assume $[2]p \neq \textsl{O}_{E_\o}$ or otherwise inequality (\ref{ATA}) is trivially true for any positive $c_1,c_2$. The duplication formula gives
$$x([2]p)=\frac{x^4-2A(\o)x^2-8B(\o)+A(\o)^2}{4x^3+4A(\o)x+4B(\o)}. $$Thus, we have
\begin{align}
&H_{E_\o}([2]p) \notag \\
&:=H\left([x([2]p),1]\right) \notag \\
&=H\left([x^4-2A(\o)x^2-8B(\o)x+A(\o)^2,4x^3+4A(\o)x+4B(\o)]\right) \notag \\
&\leq 4H\left([1,-2A(\o),-8B(\o),A(\o)^2,4,4A(\o),4B(\o)]\right)H\left([x,1]\right)^4\notag \\
&\leq 4 N_{A,B} H(\o)^{d_{A,B}} H_{E_\o}(p)^4 \label{ATA1}
\end{align}where the inequalities are obtained by triangle inequality of absolute values of $\QQ$. The constant $N_{A,B}$ depends on the coefficients and the number of monomials of $A$ and $B$, whereas $d_{A,B}$ is the maximum of $\deg A^2$ and $\deg B$. Inequality (\ref{ATA}) is obtained by taking natural logarithm of (\ref{ATA1}).

Now, we use Tate's telescoping sum trick to prove an analogy of Theorem A in \cite{SHS}:
\begin{align}
\hat{h}_{E_\o}(p)-h_{E_\o}(p) &= \sum_{n=0}^\infty \frac{1}{4^{n+1}}\left(h_{E_\o}([2^{n+1}]p)-4h_{E_\o}([2^n]p)\right) \notag \\
&=\sum_{n=0}^\infty \frac{1}{4^{n+1}}\left(h_{E_\o}([2]\circ[2^{n}]p)-4h_{E_\o}([2^n]p)\right)\notag \\
&\leq \sum_{n=0}^\infty \frac{1}{4^{n+1}} \left( c_1h(\o)+c_2\right) \qquad (\text{Using (\ref{ATA})}) \notag \\
&= \frac{c_1}{3}h(\o)+\frac{c_2}{3}.\label{Up1}
\end{align}
Finally, given any $P=(x(\ve), y(\ve)) \in E(K)$, the $X$-coordinate of $P$ defines a rational map 
\begin{align*}
\psi_P: \PP^n &\DashedArrow[->,]  \PP^1 \\
\o &\longmapsto [x(\o),1].
\end{align*}Just like in the proof of Theorem \ref{T2}, the standard property of height on projective space gives
\begin{align}h_{E_\o}(P_\o)&:=h([x(\o),1]) \notag \\
&= h(\psi_P(\o)) \notag \\
&\leq dh(\o)+c_3 \label{Up2}
\end{align}for some constants $d, c_3$ that depend on $\psi_P$ only. We get our conclusion of Theorem \ref{Up} by combining (\ref{Up1}) and (\ref{Up2}).

%%%%%%%%%%%%%%%%%%%%%%%%%%%%%%%%%%%%%%%%%%%%%%%%%%%%%%%%%%%%%%%%%%%%%%
\section{Lemmas} \label{Lemmas}
%%%%%%%%%%%%%%%%%%%%%%%%%%%%%%%%%%%%%%%%%%%%%%%%%%%%%%%%%%%%%%%%%%%%%%
Besides some results on elliptic curves over $\QQ$, the proof of Proposition \ref{p1} requires several non-trivial facts about polynomials with integer coefficients. In this section, we will state these results and give complete proofs with appropriate references. We remind the reader that we continue to use all the notations that we have defined previously. In addition, for the specialized elliptic curve $E_\o$, let $\D_{E_\o}=\D_E(\o)$ and $\D_{E_\o}^{\min}$ be the discriminant and minimum discriminant of $E_\o/\QQ$ respectively. Also, for any UFD $R$, whenever we say $P_1, \ldots, P_n$ are relatively prime in $R[\ve]$, we always mean that $P_1, \ldots, P_n$ don't have a common irreducible factor in $R[\ve]$.

\begin{lemma}\label{lowerB KN} There exists an absolute constant $C_1>0$ such that the following holds. Let $k \geq 4$ be an integer, $N_k:=\text{lcm}(1,2,3,\ldots,k)$ and suppose $\nu \in \ZZ^n$ so that $\D_E(\nu)$ is non-zero and $k^{\text{th}}$-power-free (abbreviated as $k$-free for the rest of the paper). Then for any non-torsion point $q \in E_\nu(\QQ)$, we have
$$\hat{h}_{E_\nu}(q) > \frac{C_1}{N_k^2}\log |\D_{E_\nu}^{\min}|.  $$  
\end{lemma}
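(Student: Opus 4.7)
The plan is to exploit the quadratic-form property of $\hhat_{E_\nu}$ together with a classical absolute lower bound for the canonical height on points in the identity component of the N\'eron model. The $k$-freeness hypothesis is tailor-made to force the Tamagawa numbers at all primes of bad reduction of $E_\nu$ to divide $N_k$, so that $[N_k]\,q$ reduces to the identity component at every prime.

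First I would note that $\D_{E_\nu}^{\min}$ is itself $k$-free: any integer change of variables $(X,Y)\mapsto(u^2X, u^3Y)$ producing a minimal model divides $\D_E(\nu)$ by $u^{12}$, so $v_p(\D_{E_\nu}^{\min}) \leq v_p(\D_E(\nu)) < k$ for every prime $p$. Next, by the Kodaira--N\'eron classification, at each prime $p$ of bad reduction the order $c_p$ of the component group of the special fiber satisfies $c_p \leq v_p(\D_{E_\nu}^{\min}) < k$ in the multiplicative case, and $c_p \in \{1, 2, 3, 4\}$ in the additive case (including $p = 2, 3$). Since $N_k = \lcm(1, \dots, k)$ is divisible by every integer in $\{1, \dots, k\}$ and, because $k \geq 4$, also by $12$, in every case $c_p \mid N_k$. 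Therefore $[N_k]\,q$ reduces to the identity component at every prime, i.e.\ $[N_k]\,q \in E_\nu^{0}(\QQ)$.

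Second, I would invoke the classical absolute lower bound on the canonical height for identity-component points: there exists an absolute constant $C > 0$ such that for every elliptic curve $E'/\QQ$ and every non-torsion $P \in (E')^{0}(\QQ)$,
$$\hhat_{E'}(P) \geq C \log |\D_{E'}^{\min}|.$$
(Cf.\ Hindry--Silverman, \emph{The canonical height and integral points on elliptic curves}, where the bound is obtained from the local-height decomposition $\hhat = \sum_v \hat\lambda_v$ via the clean Tate-style formulas valid for $\hat\lambda_p$ on the identity component.) Since $\hhat_{E_\nu}$ is a quadratic form, $\hhat_{E_\nu}([N_k]\,q) = N_k^{2}\, \hhat_{E_\nu}(q)$; applying the displayed bound with $P = [N_k]\,q$ (still non-torsion) yields
$$\hhat_{E_\nu}(q) \geq \frac{C}{N_k^{2}} \log |\D_{E_\nu}^{\min}|,$$
which is the desired inequality with $C_1 := C$.

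The main obstacle is pinning down the identity-component lower bound with a genuinely \emph{absolute} constant $C$, independent of the elliptic curve $E'$; this is exactly what distinguishes the identity-component case from Lang's conjectural bound for general non-torsion points, which is still open. A secondary technical check is the component-group analysis at the small primes $p = 2, 3$ in the additive case, but because Tate's algorithm produces only $c_p \in \{1, 2, 3, 4\}$ in the additive case regardless of $p$, divisibility by $N_k$ holds uniformly and the argument goes through for every $\nu$.
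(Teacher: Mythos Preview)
Your argument is correct and follows essentially the same route as the paper: bound the component-group orders via Kodaira--N\'eron using the $k$-freeness of the discriminant, multiply $q$ by $N_k$ to land in the identity component everywhere, apply the absolute identity-component lower bound (the paper cites Silverman's 1981 Duke paper rather than Hindry--Silverman, but the content is the same), and divide out $N_k^2$ via the quadratic-form property. The only cosmetic difference is that you spell out the multiplicative/additive dichotomy for $c_p$ and the $k$-freeness of $\D_{E_\nu}^{\min}$ a bit more explicitly than the paper does.
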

\begin{proof}
We make use of a weakened form of a conjecture of Serge Lang proved by Silverman in section 4 of \cite{SLB}. We apply it to a non-torsion point $q\in E_\nu(\QQ)$ such that $q$ is in $$(E_\nu)_0(\QQ_p):=\{ q\in E_\nu(\QQ_p) \ | \ q \pmod{p} \text{ is non-singular} \}$$ for every prime $p$ in $\QQ$. This is possible by Kodaira-N\'eron Theorem (\cite{SAT} Chapter VII, Theorem 6.1) which implies that the order of $E_\nu(\QQ_p)/(E_\nu)_0(\QQ_p)$ is either $\ord_p(\D_{E_\nu}^{\min})$ or at most $4$. So if $\D_E(\nu)$ is $k$-free, we have $\ord_p(\D_{E_\nu}^{\min}) \leq k$ and thus $[N_k]q$ is in $(E_\nu)_0(\QQ_p)$ for all $p$ with the choice of $N_k:=\text{lcm}(1,2,3,\ldots,k)$. Then the special case of the conjecture gives 
$$ \hat{h}_{E_\nu}([N_k]q) > C_1\log |\D_{E_\nu}^{\min}|,$$ for an absolute constant $C_1>0$. Using the fact $\hat{h}_{E_\nu}$ is a quadratic form will complete the proof.
\end{proof}

\begin{lemma}\label{small o}$$\sum_{\substack{\nu \in \ZZ^n \\ 1< ||\nu|| \leq B}} \frac{1}{\log ||\nu||}=o(B^n),$$where the implicit constant in the small $o$ depends only on $n$.

\end{lemma}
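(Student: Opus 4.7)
The plan is to exploit the fact that the summand $1/\log\|\nu\|$ is very small except when $\|\nu\|$ itself is small, while there are few lattice points with small $\|\nu\|$. This is a standard dyadic/splitting argument and should not be genuinely difficult; the main (modest) bookkeeping is to count lattice points with prescribed sup-norm.

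First I would organize the sum by the value $k=\|\nu\|$, which for $\nu\in\ZZ^n$ is a nonnegative integer. The number of $\nu\in\ZZ^n$ with $\|\nu\|=k$ equals $(2k+1)^n-(2k-1)^n$, and a direct binomial expansion (only odd-index terms survive) gives this is $\leq c_n k^{n-1}$ for some constant $c_n$ depending only on $n$. Thus
\begin{equation*}
\sum_{\substack{\nu\in\ZZ^n\\ 1<\|\nu\|\leq B}}\frac{1}{\log\|\nu\|} \;\leq\; c_n \sum_{k=2}^{\lfloor B\rfloor}\frac{k^{n-1}}{\log k}.
\end{equation*}

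Next I would split the right-hand sum at $k_0=\lfloor B^{1/2}\rfloor$. For $2\leq k\leq k_0$ I bound $1/\log k\leq 1/\log 2$ and sum $k^{n-1}$ to get a contribution of size $O_n(B^{n/2})$. For $k_0<k\leq \lfloor B\rfloor$ I bound $1/\log k\leq 2/\log B$ (valid once $B$ is moderately large), and then $\sum_{k\leq B}k^{n-1}=O_n(B^n)$ gives a contribution of size $O_n(B^n/\log B)$. Adding the two bounds yields
\begin{equation*}
\sum_{\substack{\nu\in\ZZ^n\\ 1<\|\nu\|\leq B}}\frac{1}{\log\|\nu\|} \;=\; O_n\!\left(B^{n/2}\right)+O_n\!\left(\frac{B^n}{\log B}\right) \;=\; o(B^n),
\end{equation*}
with implicit constants depending only on $n$, as required.

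There is no real obstacle here; the lemma is essentially a discrete version of the statement that $\int_2^{B}\frac{x^{n-1}}{\log x}\,dx\sim \frac{B^n}{n\log B}$, and one could equivalently argue by partial summation against this integral. I chose the explicit split at $B^{1/2}$ because it needs nothing beyond the elementary lattice-point count and avoids invoking any asymptotic machinery.
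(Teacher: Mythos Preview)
Your proof is correct and follows essentially the same approach as the paper: both reduce the sum to a one-variable quantity of the form $\sum_{k\le B} k^{n-1}/\log k$ (the paper via symmetry and integral comparison, you via the shell count $(2k+1)^n-(2k-1)^n$), and both then split at $\sqrt{B}$ to obtain the bound $O_n(B^n/\log B)=o(B^n)$. The only difference is cosmetic---you stay discrete throughout while the paper passes to the integral $\int_2^B x^{n-1}/\log x\,dx$---but the key splitting idea and the resulting estimate are identical.
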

\begin{proof} In this proof, the implicit constants of all the big $O$'s depend only on $n$. By symmetry of each quadrant in $\ZZ^n$, we have 
\begin{align*}
\sum_{\substack{\nu \in \ZZ^n \\ 1< ||\nu|| \leq B}} \frac{1}{\log ||\nu||}&=O\left(\sum_{1 < x_1\leq x_2\leq \ldots \leq x_n\leq B} \frac{1}{\log x_n}\right)\\
&=O\left( \int_2^B \int_0^{x_{n}} \cdots \int_0^{x_2} \frac{1}{\log x_n} dx_1 \cdots dx_n\right)\\
&=O\left( \int_2^B \frac{1}{(n-1)!}\frac{x_n^{n-1}}{\log x_n} dx_n\right)\\
&=O\left( \int_2^{\sqrt{B}} \frac{t^{n-1}}{\log t} dt+\int_{\sqrt{B}}^B \frac{t^{n-1}}{\log t} dt\right)\\
&=O\left(\frac{B^n}{\log B}\right).
\end{align*}
\end{proof}

\begin{lemma} \label{Mason}  Let $k,m,r \in \NN$ satisfy
\[
  \frac{1}{k} + \frac{1}{m} + \frac{1}{r} \le 1.
  \] If $P,Q,R \in \CC[\ve]$ satisfy $ P^k+Q^m=R^r,$ then either $P,Q,R$ are all constant or else they are not relatively prime.

\end{lemma}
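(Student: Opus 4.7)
The plan is to reduce Lemma~\ref{Mason} to the classical univariate Mason--Stothers theorem by induction on the number of variables $n$. Throughout I assume toward a contradiction that $P, Q, R \in \CC[\ve]$ are relatively prime and not all constant, and try to derive a contradiction from $P^k + Q^m = R^r$ together with $\frac{1}{k} + \frac{1}{m} + \frac{1}{r} \le 1$.

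For the base case $n = 1$, so $\ve = T_1$, I first observe that $P^k, Q^m, R^r$ are in fact pairwise coprime in $\CC[T_1]$: an irreducible factor dividing two of them would, via $P^k + Q^m - R^r = 0$, divide the third, and being irreducible would then divide each of $P, Q, R$, contradicting their relative primality. Mason--Stothers applied to $P^k + Q^m - R^r = 0$ then yields
\[
\max(k\deg P,\; m\deg Q,\; r\deg R) \le n_0(P^k Q^m R^r) - 1 \le \deg P + \deg Q + \deg R - 1,
\]
where $n_0$ denotes the number of distinct roots. Dividing the three resulting bounds by $k$, $m$, $r$ respectively and summing yields
\[
\deg P + \deg Q + \deg R \le \left(\frac{1}{k} + \frac{1}{m} + \frac{1}{r}\right)\left(\deg P + \deg Q + \deg R - 1\right) \le \deg P + \deg Q + \deg R - 1,
\]
which is impossible, since the left-hand side is $\ge 1$ by hypothesis.

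For the inductive step, if $P, Q, R$ all have $T_n$-degree zero they lie in $\CC[T_1, \ldots, T_{n-1}]$ and the inductive hypothesis applies directly. Otherwise set $K := \CC(T_1, \ldots, T_{n-1})$ and view $P, Q, R$ inside $K[T_n]$. By Gauss's lemma, any nontrivial common factor in $K[T_n]$, after replacement by its primitive part, would yield a nontrivial common factor of $P, Q, R$ in $\CC[\ve]$; so $P, Q, R$ are still relatively prime in $K[T_n]$, and inspection of leading $T_n$-coefficients in $P^k + Q^m = R^r$ shows that at least one of them has positive $T_n$-degree. The base-case argument now runs verbatim over $K[T_n]$---Mason--Stothers holds over any characteristic-zero field, with $n_0$ read as the degree of the squarefree kernel---and produces the same contradiction with $\deg_{T_n}$ in place of $\deg$. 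The main obstacle is precisely this transfer of hypotheses: coprimality is handled by Gauss's lemma, and the extension of Mason--Stothers to the non-algebraically-closed field $K$ is standard (its proof via Wronskians or logarithmic derivatives requires only characteristic zero), after which the induction closes cleanly.
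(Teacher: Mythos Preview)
Your proof is correct. The base case is essentially the paper's, though you extract the contradiction by dividing the three Mason--Stothers bounds by $k,m,r$ and summing, whereas the paper argues with the single maximal term; both are standard.

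The inductive step is where you genuinely diverge. The paper stays over $\CC$: it takes the resultant of (say) $P$ and $Q$ with respect to $T_n$, uses coprimality to see this resultant is a nonzero element of $\CC[T_1,\ldots,T_{n-1}]$, and then specializes $(T_1,\ldots,T_{n-1})$ to a point $y\in\CC^{n-1}$ where the resultant does not vanish and $P_y$ is nonconstant, thereby landing in the one-variable case over $\CC$ with coprimality preserved. You instead pass to the function field $K=\CC(T_1,\ldots,T_{n-1})$ and run Mason--Stothers directly in $K[T_n]$, using Gauss's lemma to transport coprimality. Your route is a bit cleaner---no resultants, no genericity argument---at the cost of invoking Mason--Stothers over a non-algebraically-closed field; as you note, the Wronskian proof needs only characteristic zero, so this is harmless. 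The paper's route has the minor advantage that it never leaves polynomial rings over $\CC$ and so can quote the most classical formulation of Mason--Stothers.
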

\begin{proof}
We first prove the case $n=1$, which is an immediate consequence of Mason--Stothers theorem ({\cite{LA} Chapter IV, Theorem 7.1} or \cite{SPI} Theorem 1.1). 
Suppose to the contrary that $P,Q,R$ are not all constant and relatively prime, then by Mason--Stothers theorem, we have
\begin{align*} \max \{ k\deg P, \  m\deg Q, \ r\deg R \} +1 &\leq \# \text{distinct roots of }P^kQ^mR^r\\
&\leq \deg P+\deg Q+\deg R.
\end{align*} Without lose of generality, suppose $k\deg P \geq m\deg Q$, which implies $k\deg P \geq r\deg R$, so the inequality above becomes
\begin{align*} k\deg P +1 &\leq \deg P+\frac{k}{m}\deg P+\frac{k}{r}\deg P
\end{align*}
$$\Rightarrow 1 \leq \left(\frac{1}{k}+\frac{1}{m}+\frac{1}{r}-1 \right)k\deg P \leq 0, $$ which is absurd. 

Now, let $P,Q,R \in \CC[\ve]$ satisfy the hypothesis of the lemma. Suppose $P,Q,R$ not all constant and relatively prime. Without lose of generality, we can assume the degrees of $T_n$ in $P,Q$ are at least $1$. We will make use of some standard results about the resultant of two polynomials in $R[x]$, where $R$ is a UFD. These results eventually boil down to linear algebra (\cite{KAA} Chapter VIII, Theorem 8.1). Consider $P,Q$ as element in $\CC[T_1, \ldots, T_{n-1}][T_n]$ and let $f \in \CC[T_1, \ldots, T_{n-1}]$ be the resultant of $P,Q$ with respect to the variable $T_n$. Then there exist non-zero $u,v \in \CC[\ve]$ with $\deg_{T_n}u < \deg_{T_n}Q $ and $\deg_{T_n}v < \deg_{T_n}P $ such that
$$uP+vQ=f.$$ 
Since $P,Q$ have no common factor in $\CC[\ve]$, $f$ cannot be identically zero. We can choose $y:=(y_1,\ldots,y_{n-1}) \in \CC^{n-1}$ such that $f(y) \neq 0$ and $P(y,T_n)$ is nonconstant. Then $P_y(T_n):=P(y,T_n)$ and $Q_y(T_n):=Q(y,T_n)$ are relatively prime in $\CC[T_n]$ and $P_y(T_n)$ is nonconstant. So we get relatively prime $P_y,Q_y,R_y \in \CC[T_n]$ such that not all are constant and satisfies the hypothesis of the lemma for $n=1$, which is impossible as we have shown previously. 
\end{proof}

\begin{lemma} \label{Mason Co}  Let $k,m,r \in \NN$ satisfy
\[
  \frac{1}{k} + \frac{1}{m} + \frac{1}{r} \le 1.
\]
Let $\ell = \lcm(k,m)$ and $g = \gcd(k,m)$, and assume that $\ell | r$. Let $P,Q,R \in \CC[\ve]$ be polynomials with $R \ne 0$ that satisfy
\[
  P^k + Q^m = R^r.
\]
Then there exists $\alpha_1,\alpha_2 \in \CC$ such that
$$                   P=\a_1 R^{\frac{m}{g}\frac{r}{\ell}} \quad \text{ and } \quad Q=\a_2 R^{\frac{k}{g}\frac{r}{\ell}}. $$

\end{lemma}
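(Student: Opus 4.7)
The plan is to pass to the fraction field $\CC(\ve)$ and reduce the statement to a single application of Lemma \ref{Mason}. Since $\ell \mid r$, both $r/k$ and $r/m$ are positive integers, so $P = \alpha_1 R^{r/k}$ and $Q = \alpha_2 R^{r/m}$ are genuine polynomial identities. Dividing the hypothesis $P^k + Q^m = R^r$ by $R^r$ and setting $u := P/R^{r/k}$, $v := Q/R^{r/m}$ in $\CC(\ve)$ yields $u^k + v^m = 1$; it therefore suffices to show that $u, v$ lie in $\CC$.

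The degenerate cases $P = 0$ or $Q = 0$ are handled directly. If $P = 0$, then $Q^m = R^r = (R^{r/m})^m$, and since $\CC[\ve]$ is a UFD, taking $m$-th roots gives $Q = \zeta R^{r/m}$ for some $m$-th root of unity $\zeta \in \CC$; take $\alpha_1 = 0$ and $\alpha_2 = \zeta$. The case $Q = 0$ is symmetric.

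Assume now $P, Q \ne 0$, and write $u = A/D$ and $v = B/E$ in lowest terms, so that $\gcd(A, D) = \gcd(B, E) = 1$ in $\CC[\ve]$. Clearing denominators in $u^k + v^m = 1$ gives $A^k E^m + B^m D^k = D^k E^m$. Rearranging, $D^k$ divides $A^k E^m$, and since $\gcd(A^k, D^k) = 1$ we deduce $D^k \mid E^m$; symmetrically $E^m \mid D^k$. Thus $D^k$ and $E^m$ are associates in $\CC[\ve]$, and after absorbing a constant we may assume $D^k = E^m$. Writing $g = \gcd(k, m)$, $k = gk'$, $m = gm'$ with $\gcd(k', m') = 1$, and equating prime exponents in $D^k = E^m$, unique factorization produces an $F \in \CC[\ve]$ (again up to a constant we absorb) with $D = F^{m'}$ and $E = F^{k'}$. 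Then $\gcd(A, D) = 1$ forces $\gcd(A, F) = 1$, and similarly $\gcd(B, F) = 1$. Substituting into $u^k + v^m = 1$ and using $m'k = k'm = \ell$, we obtain the polynomial identity
$$A^k + B^m = F^\ell.$$

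It remains to apply Lemma \ref{Mason} to this equation. The hypothesis $1/k + 1/m + 1/r \le 1$ forces $k, m \ge 2$ and excludes $(k, m) = (2, 2)$; a direct case check then shows that $k + m + g \le km$, i.e.\ $1/k + 1/m + 1/\ell \le 1$, so Lemma \ref{Mason} applies. It asserts that $A, B, F$ are all constant or share a common irreducible factor; the latter would force an irreducible $\pi$ to divide both $A$ and $F$, contradicting $\gcd(A, F) = 1$. Hence $A, B, F$ are all constant, so $D = F^{m'}$ and $E = F^{k'}$ are constant, and finally $u = A/D$, $v = B/E$ lie in $\CC$. Taking $\alpha_1 = u$ and $\alpha_2 = v$ yields the conclusion. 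I expect the main technical steps to be the unique-factorization reduction producing $D = F^{m'}$, $E = F^{k'}$ from $D^k = E^m$ (which relies on $\gcd(k', m') = 1$), together with verifying the exponent inequality $1/k + 1/m + 1/\ell \le 1$ needed to invoke Lemma \ref{Mason}.
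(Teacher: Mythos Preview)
Your proof is correct. Both your argument and the paper's reduce to the single polynomial identity $A^k+B^m=F^\ell$ with $\gcd(A,F)=\gcd(B,F)=1$, verify $1/k+1/m+1/\ell\le 1$, and then invoke Lemma~\ref{Mason}; the difference is only in how that identity is reached. The paper sets $S=R^{r/\ell}$, expands $P,Q,S$ into their irreducible factorizations, and argues combinatorially that for each irreducible $G_i$ two of the three exponents $a_ik,\,b_im,\,c_i\ell$ must coincide, then strips off the common factors to produce the coprime triple. You instead divide through by $R^r$ in the fraction field, reduce to showing the rational functions $u=P/R^{r/k}$ and $v=Q/R^{r/m}$ are constant, and obtain the coprime triple from the lowest-terms representations $u=A/D$, $v=B/E$ together with the observation $D^k\sim E^m$ forcing $D=F^{m'}$, $E=F^{k'}$. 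Your route is a bit slicker---the coprimality of $A,B$ with $F$ is automatic from the lowest-terms hypothesis, whereas the paper has to track exponents prime by prime---while the paper's route has the minor advantage of never leaving $\CC[\ve]$. The verification of $1/k+1/m+1/\ell\le 1$ is essentially identical in both.
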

\begin{proof} The case where $P,Q,R$ are all constant is trivial. So suppose $P,Q,R$ are not all constant. We let $S:=R^{\frac{r}{\ell}}$ and we have
\begin{equation}P^k+Q^m=S^\ell. \label{Eq PQS} \end{equation} 
Let $G_1, \ldots, G_s$ be the distinct irreducible factors of $PQS$ and write 
$$P=\a\prod_i G_i^{a_i}, \quad Q=\b\prod_i G_i^{b_i}, \quad S=\g\prod_i G_i^{c_i}$$ with $\a,\b,\g \in \CC$. Then we can rewrite the equality (\ref{Eq PQS}) as
\begin{equation}\a^k\prod_i G_i^{a_ik}+\b^m\prod_i G_i^{b_im}=\g^\ell\prod_i G_i^{c_i\ell}. \label{eq factor G}\end{equation}
We claim that $a_ik=b_im=c_i\ell$ for all $i$.
Notice that we cannot have one exponent of $G_i$ in equation (\ref{eq factor G}) that is strictly less than the other two,
otherwise by dividing by the least power $G_i$ factor, we get a contradiction. So two of the exponents of $G_i$ in equation (\ref{eq factor G}) are equal and at most equal to the third one. We divide equation (\ref{eq factor G}) by $G_i$ with the common lower exponent and we do this for all $i$.  Using the fact that $\ell=\lcm(k,m)$, the resulting equation can be written in the form
$$ P_1^k+Q_1^m=S_1^\ell, $$ where $P_1,Q_1,S_1$ are either all constant or relatively prime. Notice that the former case corresponds to our claim $a_ik=b_im=c_i\ell$ for all $i$ and we are going to prove that this must be the case. Since $\frac{1}{k}+\frac{1}{m}+\frac{1}{r}\leq 1$, without lose of generality, $k \geq 2$ and $m \geq 3$ and one easily verifies that $\ell:=\text{lcm}(k,m) \geq 6$ except for the cases $(k,m,\ell)=(3,3,3), (2,4,4), (4,4,4), (5,5,5)$. So we always have $\frac{1}{k}+\frac{1}{m}+\frac{1}{\ell}\leq 1$ and hence we can apply lemma \ref{Mason} on $P_1,Q_1,S_1$ to conclude that they are all constant. So we have $$P=\a_1 S^\frac{\ell}{k}=\a_1 S^\frac{m}{g} \quad \text{ and } \quad Q=\a_2 S^\frac{\ell}{m}=\a_2 S^\frac{k}{g} .$$ Substituting back $S=R^{\frac{r}{\ell}}$ completes the proof.     
\end{proof}

To avoid heavy notation in the proofs below, we denote $$\ZZ_B:=\ZZ \cap [-B,B]$$ and for any $F\in \ZZ[\ve]$,
$$\r_F(m):=\{\nu\in(\ZZ/m\ZZ)^n \ | \ F(\nu)\equiv 0 \pmod{m}\}, $$
$$||F||:=\max\{|c| \ | \ c \text{ is a coefficient of }F \}. $$
Note: 
\begin{enumerate}
\item By abuse of notation, the symbol $\equiv$ used in the proofs of lemmas \ref{Np} and \ref{Np2 nonsq} has three different meanings depending on the context. When $f$ is an element of $\ZZ[\textbf{x}]$, $f \equiv 0$ means $f$ is the zero polynomial. The notation $f \equiv 0$ in $\ZZ/p\ZZ[\textbf{x}]$ means the reduction mod $p$ of $f$ is the zero polynomial in $\ZZ/p\ZZ[\textbf{x}]$. If we evalaute $f$ at $x$ and $f(x)$ is an integer, the notation $f(x) \equiv 0 \pmod{p}$ means $p$ divides $f(x)$.
\item By definition, a polynomial $F\in \ZZ[\ve]$ consists the information of its domain. Thus, if an implicit constant in the big $O$ or small $o$ notation is said to be dependent on $F$, that means that it depends on $\deg F$ and $n$ as well.
\end{enumerate}

\begin{lemma} \label{Np} Let $F \in \ZZ[\ve]$ with total degree $d \geq 1$. Then for all prime $p$ bigger than $||F||$, we have
$$N_p(F,B):=\#\{ \nu \in \ZZ_B^n \ | \ F(\nu) \equiv 0 \pmod{p} \}=O\left(\frac{B^n}{p}+B^{n-1}\right), $$where the implicit constant in the big $O$ depends only on $n$ and $d$.

\end{lemma}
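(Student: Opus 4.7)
The plan is to prove the bound by induction on the number of variables $n$, with the hypothesis $p > \|F\|$ invoked at every step to guarantee that the leading coefficient of $F$ (in an appropriate variable) does not vanish modulo $p$. For the base case $n = 1$, the reduction $\bar{F} \in \FF_p[T_1]$ still has degree exactly $d$, since its leading coefficient is a nonzero integer of absolute value less than $p$; hence $\bar{F}$ has at most $d$ roots in $\FF_p$, and each root lifts to at most $\lfloor 2B/p\rfloor + 1$ integers in $\ZZ_B$, giving $N_p(F,B) \leq d\bigl(\lfloor 2B/p\rfloor + 1\bigr) = O(B/p + 1)$, which matches the claim for $n=1$.

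For the inductive step, I would write
$$F(T_1,\ldots,T_n) = \sum_{i=0}^{d_n} F_i(T_1,\ldots,T_{n-1})\,T_n^i$$
with $F_{d_n} \not\equiv 0$ in $\ZZ[T_1,\ldots,T_{n-1}]$; note $\|F_{d_n}\| \leq \|F\| < p$ and the total degree of $F_{d_n}$ is at most $d - d_n \leq d$. If $d_n = 0$, then $F$ is independent of $T_n$ and $N_p(F,B) = (2B+1) \cdot N_p(F_0,B)$, so the inductive hypothesis gives $O(B) \cdot O(B^{n-1}/p + B^{n-2}) = O(B^n/p + B^{n-1})$. If $d_n \geq 1$, I would partition $\ZZ_B^n$ according to whether $F_{d_n}(\nu_1,\ldots,\nu_{n-1})$ is nonzero mod $p$ (the \emph{generic fibers}) or zero mod $p$ (the \emph{bad fibers}).

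On the generic fibers, the one-variable reduction $F(\nu_1,\ldots,\nu_{n-1},T_n) \in \FF_p[T_n]$ retains degree $d_n$, hence has at most $d_n$ roots in $\FF_p$, contributing at most $d_n\bigl(\lfloor 2B/p\rfloor + 1\bigr)$ admissible $\nu_n$; multiplying by the trivial bound $(2B+1)^{n-1}$ on the number of base points yields $O(B^{n-1}) \cdot O(B/p + 1) = O(B^n/p + B^{n-1})$. On the bad fibers, I apply the inductive hypothesis to $F_{d_n}$ to bound the number of admissible base points by $O(B^{n-1}/p + B^{n-2})$; each contributes at most $2B+1$ values of $\nu_n$, again giving $O(B^n/p + B^{n-1})$. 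Summing the two contributions yields the stated bound. Nothing here is genuinely delicate — this is a straightforward ``Schwartz--Zippel over $\FF_p$ plus box-counting'' argument — and the only place the hypothesis $p > \|F\|$ really matters is to preserve the degree and nonvanishing of the leading coefficient upon reduction. The main bookkeeping item is verifying that the implicit constant depends only on $n$ and $d$, which is immediate because $d_n \leq d$ at every stage and the inductive constant depends only on $n-1$ and $d$.
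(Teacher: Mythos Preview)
Your proof is correct and follows essentially the same induction-on-$n$ strategy as the paper: split the base points $\nu' = (\nu_1,\ldots,\nu_{n-1})$ into good and bad fibers, bound the bad fibers by the inductive hypothesis applied to a coefficient polynomial, and bound the good fibers by the one-variable root count. The only cosmetic difference is that you declare a fiber bad when the \emph{leading} coefficient $F_{d_n}(\nu')$ vanishes mod $p$, whereas the paper declares it bad only when \emph{all} coefficients vanish (i.e., $F_{\nu'} \equiv 0$ in $\FF_p[T_n]$); your bad set is larger but obeys the same $O(B^{n-1}/p + B^{n-2})$ bound, so the argument goes through identically.
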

\begin{proof}In this  proof, the implicit constants of all the big $O$'s depend only on $n$ and $d$. We prove by induction on $n$. For $n=1$, with the condition on $p$, $F \not \equiv 0 \in \ZZ/p\ZZ[T_1]$. So $$N_p(F,B)\leq \r_F(p)\left(\frac{2B}{p}+1\right) \leq d \left(\frac{2B}{p}+1\right).$$ 
Now let $F \in \ZZ[\ve]$ and for $y \in \ZZ^{n-1}$, $F_y(T_n):=F(y,T_n) \in \ZZ[T_n].$ The condition $F_y \equiv 0 \text{ in } \ZZ/p\ZZ[T_n]$ becomes a bunch (at most $d$) of polynomials of degree at most $d$ in $\ZZ[T_1,\ldots,T_{n-1}]$ equal zero mod $p$. Thus, by induction hypothesis, $$\#\{ y \in \ZZ_B^{n-1} \ | \ F_y \equiv 0 \text{ in } \ZZ/p\ZZ[T_n]  \}=O\left(\frac{B^{n-1}}{p}+B^{n-2}\right).$$ So we get
\begin{align*}
N_p(F,B)&=\sum_{\substack{ y \in \ZZ_B^{n-1} \\ \hidewidth F_y \equiv 0 \text{ in } \ZZ/p\ZZ[T_n]  \hidewidth}} N_p(F_y,B)+\sum_{\substack{ y \in \ZZ_B^{n-1} \\  \hidewidth F_y \not \equiv 0 \text{ in } \ZZ/p\ZZ[T_n] \hidewidth }} N_p(F_y,B)\\
&\leq O\left(\frac{(2B)^{n-1}}{p}+(2B)^{n-2}\right)(2B+1)+(2B+1)^{n-1} d \left(\frac{2B}{p}+1\right)\\
&=O\left(\frac{B^n}{p}+B^{n-1}\right).
\end{align*}
\end{proof}

\begin{lemma} \label{Np2 nonsq} \label{k-free} Suppose $F(\ve) \in \ZZ[\ve]$ has total degree $d\geq 1$ and has no repeating irreducible factor in $\ZZ[\ve]$. Then except for finitely many prime $p$, we have
\begin{align*}
N_{p^2}(F,B)&:=\#\{ \nu \in \ZZ_B^n \ | \ F(\nu) \equiv 0 \pmod{p^2} \}\\&=O\left(\frac{B^n}{p^2}+B^{n-1}\right), 
\end{align*}whenever $p \leq B$. The implicit constant in the big $O$ depends only on $F$.
\end{lemma}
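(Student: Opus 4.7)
The plan is to partition $\ZZ_B^n$ into residue classes modulo $p$ and estimate the contribution to $N_{p^2}(F,B)$ from each class. Writing $\bar F$ for the reduction $F \bmod p$, only classes $y \in (\ZZ/p\ZZ)^n$ with $\bar F(y) = 0$ can contribute, and for those I would split according to whether $y$ is a smooth or singular point of the hypersurface $V(\bar F) \subset \AA^n_{\FF_p}$. This refines the strategy of Lemma \ref{Np} by one extra level of lifting, the count in each class now being controlled by a Hensel-type condition.

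The first step is to remove the bad primes. Factor $F = F_1 \cdots F_s$ into distinct irreducibles in $\ZZ[\ve]$. A standard resultant-and-discriminant argument (excluding the finitely many primes dividing $\Resultant_{T_j}(F_i,F_{i'})$ for $i \ne i'$ and $\Disc_{T_j}(F_i)$ for a suitable coordinate $T_j$, together with the relevant leading coefficients) shows that for all but finitely many $p$ the factors $\bar F_1, \ldots, \bar F_s$ remain pairwise coprime and each is square-free in $\FF_p[\ve]$. For such $p$, $\bar F$ is square-free, $V(\bar F)$ is equidimensional of dimension $n-1$, and the singular locus $V(\bar F, \partial_1 \bar F, \ldots, \partial_n \bar F)$ has dimension at most $n-2$. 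A slicing induction analogous to Lemma \ref{Np} then yields $\#V(\bar F)(\FF_p) = O(p^{n-1})$ and $\#V(\bar F, \partial_1 \bar F, \ldots, \partial_n \bar F)(\FF_p) = O(p^{n-2})$, with implicit constants depending only on $F$.

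For the main count, I fix an integer lift of each zero $y$ and parametrize the lattice points in the residue class by $\nu = y + p\mu$ with $\mu \in \ZZ^n$ ranging over a box of side $O(B/p)$ (using $p \le B$). If $y$ is smooth, some $\partial_i F(y) \not\equiv 0 \pmod p$, and Taylor expansion gives
$$F(y + p\mu) \equiv F(y) + p \sum_{i=1}^n \mu_i \frac{\partial F}{\partial T_i}(y) \pmod{p^2},$$
so, upon dividing by $p$, the condition $F(\nu) \equiv 0 \pmod{p^2}$ becomes a single nontrivial $\FF_p$-linear congruence on the coordinates of $\mu$. This pins one coordinate of $\mu$ modulo $p$, so the number of valid $\mu$ in the box is $O((B/p)^{n-1}(B/p^2 + 1))$; summing over the $O(p^{n-1})$ smooth $y$'s contributes $O(B^n/p^2 + B^{n-1})$. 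For singular $y$ I use the trivial count $O((B/p)^n)$ of lifts per class; summing over the $O(p^{n-2})$ singular $y$'s contributes an additional $O(B^n/p^2)$. Adding these yields the claimed bound.

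The main obstacle I expect is the algebraic-geometric input: cleanly verifying that $\bar F$ stays square-free and that its singular locus has codimension at least two for all but finitely many primes $p$, together with the point-count bounds on varieties over $\FF_p$ with constants depending only on $\deg F$ and $n$. Once this setup is in place, the counting step is a routine refinement of the linear-congruence argument already used in Lemma \ref{Np}, the new ingredient being the Hensel-style observation that at a smooth $\FF_p$-point a zero of $\bar F$ lifts to a zero mod $p^2$ only along an $(n-1)$-dimensional affine subspace in the $\mu$-coordinates.
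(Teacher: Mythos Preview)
Your proposal is correct and takes a genuinely different route from the paper's proof. The paper argues by induction on $n$: it singles out the last variable $T_n$, writes $F_y(T_n)=F(y,T_n)$ for $y\in\ZZ_B^{n-1}$, and splits the count according to whether $\Disc_{T_n}(F_y)\equiv 0\pmod p$ and whether $F_y\equiv 0$ in $\FF_p[T_n]$. The delicate piece---bounding the number of $y$ for which $F_y$ vanishes identically mod $p$---is handled by invoking Poonen's estimate (from \cite{PSV}) that two coprime polynomials have $O(p^{n-3})$ common zeros in $(\ZZ/p\ZZ)^{n-1}$; the paper also needs a separate Case~2 when the coefficients $a_i(\textbf{Y})$ of $F$ in $T_n$ share a nontrivial factor. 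Your approach instead works directly over $(\ZZ/p\ZZ)^n$ with no induction: you decompose the zeros of $\bar F$ into smooth and singular points, use Hensel to see that each smooth class imposes a genuine linear congruence on the lift $\mu$, and bound the singular contribution by the codimension-two estimate $\#\operatorname{Sing}V(\bar F)(\FF_p)=O(p^{n-2})$. Your version is more geometric and arguably cleaner---it makes transparent that the whole lemma is a Hensel-lifting count governed by $\dim\operatorname{Sing}V(\bar F)\le n-2$---while the paper's version stays closer to the one-variable machinery of Lemma~\ref{Np} and trades the algebro-geometric input for an explicit appeal to \cite{PSV}. The substantive obstacle you flag (uniform point-count bounds on $V(\bar F)$ and its singular locus with constants depending only on $d,n$) is real but standard, obtainable by the same kind of slicing used in Lemma~\ref{Np}.
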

\begin{proof}In this  proof, the implicit constants of all the big $O$'s depend only on $F$, which includes $d$ and $n$. Since we allow finitely many exception on $p$, we can assume $F$ is primitive, i.e. the content of $F$ is $1$. For $n=1$, if $p \notdivide $Disc$(F)\neq 0$, then $N_{p^2}(F,B)\leq d\left(\frac{2B}{p^2}+1\right).$ 
Now let $F \in \ZZ[\ve]$ and for $y \in \ZZ^{n-1}$, $F_y(T_n):=F(y,T_n) \in \ZZ[T_n].$ Let $\textbf{Y}:=(T_1, \ldots, T_{n-1})$. By Gauss' lemma for UFDs, the fact that $F$ has no repeating irreducible factor in $\ZZ[\ve]=\ZZ[\textbf{Y}][T_n]$ implies the same holds in $\QQ(\textbf{Y})[T_n]$. So $D(\textbf{Y}):=\text{Disc}(F_\textbf{Y})\not \equiv 0 \in \ZZ[\textbf{Y}]$. Also, for $p$ bigger than $||D||$, $D(\textbf{Y})\not \equiv 0 \in \ZZ/p\ZZ[\textbf{Y}]$.
We write $F_\textbf{Y}(T_n)=a_d(\textbf{Y})T_n^d+\ldots+a_0(\textbf{Y})$ and we divide into two cases:

\noindent Case 1: gcd$(a_d(\textbf{Y}), \ldots, a_0(\textbf{Y}))=1$. We decompose $N_{p^2}(F,B)$ into the following three sums:
\begin{align*}
\sum_{\substack{  y \in \ZZ_B^{n-1}  \\ \hidewidth D(y) \not \equiv 0 \pmod{p} \hidewidth} } N_{p^2}(F_y,B)+\sum_{\substack{ y \in \ZZ_B^{n-1} \\ \hidewidth F_y \not \equiv 0   \text{ in } \ZZ/p\ZZ[T_n] \hidewidth \\ \hidewidth D(y) \equiv 0 \pmod{p} \hidewidth }} N_{p^2}(F_y,B)+\sum_{\substack{ y \in \ZZ_B^{n-1} \\ \hidewidth F_y \equiv 0   \text{ in } \ZZ/p\ZZ[T_n] \hidewidth }} N_{p^2}(F_y,B)
\end{align*}
The first sum is trivially bounded by $(2B+1)^{n-1}d\left(\frac{2B}{p^2}+1\right)$. Whereas for the second sum, we apply lemma \ref{Np} on $D(\textbf{Y})$ to get an upper bound $$O\left(\frac{B^{n-1}}{p}+B^{n-2}\right)dp\left(\frac{2B}{p^2}+1\right)=O\left(\frac{B^n}{p^2}+B^{n-1}\right).$$ Lastly, since gcd$(a_d(\textbf{Y}), \ldots, a_0(\textbf{Y}))=1$, if we look at the $y\in(\ZZ/p\ZZ)^{n-1}$ such that $F_y \equiv 0   \text{ in } \ZZ/p\ZZ[T_n]$, either $y$ is a common root in $(\ZZ/p\ZZ)^{n-1}$ of at least two polynomials that are relatively prime in $\ZZ[T_1, \ldots,T_{n-1}]$ or there is no such $y$ because there is only one non-zero $a_i(\textbf{Y})$ and it must be $1$ by assumption of case 1. From the proof of Theorem 3.1 of Poonen in \cite{PSV}, the set of such $y\in(\ZZ/p\ZZ)^{n-1}$ has order $O(p^{(n-1)-2})$ for large $p$. Hence we have for $p\leq B$,
\begin{align*}
\sum_{\substack{ y \in \ZZ_B^{n-1} \\ \hidewidth F_y \equiv 0   \text{ in } \ZZ/p\ZZ[T_n]\hidewidth }} N_{p^2}(F_y,B)&\leq O(p^{n-3})\left(\frac{2B}{p}+1\right)^{n-1}(2B+1)\\
&=O(p^{n-3})O\left(\frac{B^{n-1}}{p^{n-1}}+\frac{B^{n-2}}{p^{n-2}}\right)(2B+1)\\
&=O\left(\frac{B^n}{p^2}+B^{n-1}\right)
\end{align*}and we are done.

\noindent Case 2: gcd$(a_d(\textbf{Y}), \ldots, a_0(\textbf{Y}))=g(\textbf{Y}) \neq 1$. Then $F(\textbf{T})=A(\textbf{T})g(\textbf{Y})$ for some nonconstant $A(\textbf{T}) \in \ZZ[\ve].$ Then
$$
N_{p^2}(F,B)
=\sum_{\substack{ y \in \ZZ_B^{n-1} \\ \hidewidth g(y) \equiv 0 \pmod {p^2}\hidewidth}} O(2B)+\sum_{\substack{ y \in \ZZ_B^{n-1} \\ p||g(y)}} N_{p}(A_y,B)+\sum_{\substack{ y \in \ZZ_B^{n-1} \\ p \notdivide g(y)}} N_{p^2}(A_y,B),
$$
where $p||g(y)$ means $p|g(y)$ but $p^2 \notdivide g(y)$. Since $g(\textbf{Y})$ does not have repeating irreducible factor in $\ZZ[T_1,\ldots, T_{n-1}]$ too, we use the induction on $n$ to bound the first sum by $O_{d,n}\left(\frac{B^{n-1}}{p^2}+B^{n-2}\right)O(2B)$. As for the third sum, it is trivially bounded by $N_{p^2}(A,B)$ and this reduces to case 1. Finally, we split the middle sum as follows:
$$\sum_{\substack{ y \in \ZZ_B^{n-1} \\ p||g(y)}} N_{p}(A_y,B)=\sum_{\substack{ y \in \ZZ_B^{n-1} \\ p||g(y) \\ A_y \not \equiv 0 \text{ in } \ZZ/p\ZZ[T_n]}} N_{p}(A_y,B)+\sum_{\substack{ y \in \ZZ_B^{n-1} \\ p||g(y) \\ A_y \equiv 0 \text{ in } \ZZ/p\ZZ[T_n]}} O(2B). $$ Using lemma \ref{Np}, we can bound the first sum by 
$$O\left(\frac{B^{n-1}}{p}+B^{n-2}\right)d \left(\frac{2B}{p}+1\right)=O\left(\frac{B^n}{p^2}+B^{n-1}\right).$$ As for the second sum, since $A$ is of case 1, the order of $y \in (\ZZ/p\ZZ)^{n-1}$ such that $A_y \equiv 0 \text{ in } \ZZ/p\ZZ[T_n]$ is $O(p^{(n-1)-2})$ and so the sum is bounded by 
$$O(p^{n-3}) \left(\frac{2B}{p}+1\right)^{n-1}O(2B)=O\left(p^{n-3}\left(\frac{3B}{p}\right)^{n-1} B\right)=O\left(\frac{B^n}{p^2}\right),$$ for large $p\leq B$.
\end{proof}

\begin{lemma} \label{k-free} Suppose $F(\ve) \in \ZZ[\ve]$ has total degree $d$ and has no repeating irreducible factor in $\ZZ[\ve]$. 
Then for all integers $k\gg d$  
we have
$$\# \{ \nu \in \ZZ_B^n \ | \ F(\nu) \text{ is } k\text{-free} \}\sim \g_{k,F}(2B)^n, $$ where $\displaystyle \g_{k,F}:=\prod_{\text{prime }p\in \ZZ}\left(1-\frac{\r_F(p^k)}{p^{nk}}\right)$ is a non-zero convergent Euler product. Here, we adopt the convention that $0$ is $k$-free for all integer $k \geq 2$.
\end{lemma}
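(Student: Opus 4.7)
The plan is a standard M\"obius-inversion sieve, calibrated with a small/medium/large prime trichotomy. Since $F\not\equiv 0$, its zero locus meets $\ZZ_B^n$ in $O(B^{n-1})$ points, so one may restrict to $\nu$ with $F(\nu)\ne 0$. For such $\nu$ only finitely many primes satisfy $p^k\mid F(\nu)$, and
\[
\mathbf{1}[F(\nu)\text{ is }k\text{-free}]=\prod_p\bigl(1-\mathbf{1}[p^k\mid F(\nu)]\bigr).
\]
I truncate the product at a parameter $Y=Y(B)$ (the choice $Y=(\log B)/(2k)$ will do) and bound the truncation error pointwise by $\sum_{p>Y}\mathbf{1}[p^k\mid F(\nu)]$; inclusion--exclusion then gives
\[
N(B)=\sum_{\substack{m\text{ sqfree}\\ Y\text{-smooth}}}\mu(m)\,N_{m^k}(F,B)+O\!\Bigl(\sum_{p>Y}N_{p^k}(F,B)\Bigr)+O(B^{n-1}).
\]

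For the main sum, this choice of $Y$ ensures $\prod_{p\le Y}p\le e^{(1+o(1))Y}=B^{1/(2k)+o(1)}$, so every $Y$-smooth squarefree $m$ satisfies $m^k\le 2B$, and Chinese-Remainder equidistribution yields $N_{m^k}(F,B)=\r_F(m^k)(2B)^n/m^{kn}+O(\r_F(m^k)B^{n-1}/m^{k(n-1)})$. Summing produces the leading term $(2B)^n\prod_{p\le Y}(1-\r_F(p^k)/p^{kn})$, which converges to $(2B)^n\g_{k,F}$ with rate $O(B^n/Y)=o(B^n)$. The accompanying error term, controlled by the Hensel-style estimate $\r_F(p^k)\le d\,p^{k(n-1)}$ valid for all but finitely many $p$ (a consequence of $F$ having no repeated factor, in the spirit of the reductions used in Lemma~\ref{Np2 nonsq}), telescopes to $O(B^{n-1+o(1)})=o(B^n)$.

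For the tail $\sum_{p>Y}N_{p^k}(F,B)$, the key observation is $|F(\nu)|\le c_F B^d$ on $\ZZ_B^n$, so when $F(\nu)\ne 0$ only primes $p\le(c_F B^d)^{1/k}$ can contribute. Since $k\ge 2$ gives $N_{p^k}\le N_{p^2}$, Lemma~\ref{Np2 nonsq} supplies $N_{p^2}(F,B)=O(B^n/p^2+B^{n-1})$, whence
\[
\sum_{Y<p\le(c_F B^d)^{1/k}}\!\!\!N_{p^k}(F,B)=O\!\Bigl(B^n\!\sum_{p>Y}\!\tfrac{1}{p^2}\Bigr)+O\!\Bigl(\tfrac{B^{n-1+d/k}}{\log B}\Bigr)=O(B^n/Y)+o(B^n),
\]
and the second bound is $o(B^n)$ precisely because $d/k<1$---this is the exact point where the hypothesis $k\gg d$ is used.

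It remains to verify that $\g_{k,F}$ is a nonzero convergent Euler product. Convergence follows from $\r_F(p^k)/p^{kn}\le\r_F(p^2)/p^{2n}=O(1/p^2)$: the first inequality comes from the projection $(\ZZ/p^k\ZZ)^n\to(\ZZ/p^2\ZZ)^n$ (each fibre has size $p^{(k-2)n}$), and the bound $\r_F(p^2)=O(p^{2(n-1)})$ is extracted from Lemma~\ref{Np2 nonsq} by comparing leading coefficients in $B$. For non-vanishing, fix any $\nu_0\in\ZZ^n$ with $F(\nu_0)\ne 0$: for $k>\log_2|F(\nu_0)|$ one has $v_p(F(\nu_0))<k$ at every prime $p$, so $\nu_0\bmod p^k$ witnesses $\r_F(p^k)<p^{kn}$, and every Euler factor is strictly positive. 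The main obstacle is bookkeeping: one must pick $Y$ large enough that the truncated Euler product is close to $\g_{k,F}$ yet small enough that $Y$-smooth squarefree $m$'s all satisfy $m^k\le 2B$, and then deploy $k\gg d$ at precisely the juncture where the medium-prime contribution $B^{n-1}\cdot B^{d/k}$ must be forced to be $o(B^n)$.
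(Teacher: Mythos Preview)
Your proof is correct and follows essentially the same strategy as the paper: a M\"obius sieve truncated at a parameter, with the tail controlled by Lemma~\ref{Np2 nonsq} (via $N_{p^k}\le N_{p^2}$) and the trivial size bound $|F(\nu)|\le c_F B^d$ to eliminate primes beyond $(c_FB^d)^{1/k}$; the hypothesis $k>d$ enters at exactly the same place in both arguments. The only organizational difference is that you let the cutoff $Y=Y(B)$ grow with $B$ and obtain the asymptotic in a single pass, whereas the paper keeps its parameter $\xi$ fixed, computes the truncated product, and then squeezes by sending $\xi\to\infty$ after $B\to\infty$---your version is slightly more streamlined, but the ideas and key inputs are identical.
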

\begin{proof}In this  proof, we will introduce some arbitrary constants $\xi, \e>0$, and the implicit constants of all the big $O$'s depend only on $F$, $k$, $\xi$ and $\e$. We adapt the idea of Browning in section 4 of \cite{BPF}. Let $\xi>0$ be a constant and define the following sets:
\begin{align*}
N_\text{fr}^{(k)}:&=\{ \nu \in \ZZ_B^n \ | \ F(\nu) \text{ is } k\text{-free} \}, \\
N_\text{nfr, 1}^{(k)}:&=\left\{ \nu \in \ZZ_B^n \ \left| \parbox{0.6\hsize}{$F(\nu)$ is not  $k$-free, and for all prime $p$ such that $p^k|F(\nu)$, we have $\xi<p\leq B$} \right.\right\},\\
N_\text{nfr, 2}^{(k)}:&=\left\{  \nu \in \ZZ_B^n  \ \left| \ \parbox{0.6\hsize}{$F(\nu)$ is not $k$-free, and for all prime $p$ such that $p^k|F(\nu)$, we have $p>\xi$, and  \\$p^k|F(\nu)$ for some prime $p > B$ }\right.\right\},\\
N^{(k)}_\xi:&=\{ \nu \in \ZZ_B^n \ | \text{ if } p^k|F(\nu) \text{ then } p>\xi \}=N_\text{fr}^{(k)}\sqcup N_\text{nfr, 1}^{(k)} \sqcup N_\text{nfr, 2}^{(k)},\\
M_\text{fr}^{(2)}:&=\left\{ \nu \in \ZZ_B^n \ \left| \begin{array}{ll} F(\nu) \text{ is } k\text{-free, and} \\ p^2|F(\nu) \text{ for some prime } \xi<p \leq B \end{array}\right.\right\},\\
M_\text{nfr}^{(2)}:&=\left\{ \nu \in \ZZ_B^n \ \left| \begin{array}{ll} F(\nu) \text{ is not } k\text{-free, and} \\ p^2|F(\nu) \text{ for some prime }  \xi<p \leq B\end{array}\right.\right\}, \\
M^{(2)}:&=\{ \nu \in \ZZ_B^n \ | \ p^2|F(\nu) \text{ for some prime } \xi<p \leq B\}=M_\text{fr}^{(2)}\sqcup M_\text{nfr}^{(2)}.
\end{align*}
Then obviously we have $M_\text{fr}^{(2)} \subset N_\text{fr}^{(k)}$ and $N_\text{nfr, 1}^{(k)} \subset M_\text{nfr}^{(2)}$ and thus 
$$ \#N^{(k)}_\xi \geq \#N_\text{fr}^{(k)} \geq \#N^{(k)}_\xi - \#M^{(2)} - \#N_\text{nfr, 2}^{(k)}. $$

We first estimate $\#N^{(k)}_\xi$ with the help of M\"{o}bius function $\mu$. Let
$$\r_F(m):=\#\{ \nu \in  (\ZZ/m\ZZ)^n\ | \ F(\nu) \equiv 0 \pmod{m} \}.$$ Then we can write
\begin{align*}
\#N^{(k)}_\xi&=\sum_{ \substack{ h \in \NN \\ p|h\Rightarrow p \leq \xi} }\mu(h)\#\{ \nu \in \ZZ_B^n \ | \ h^k|F(\nu)\}\\
&=\sum_{ \substack{ h \in \NN \\ p|h\Rightarrow p \leq \xi} }\mu(h)\r_F(h^k)\left( \frac{2B}{h^k}+O(1)\right)^n\\
&=\sum_{ \substack{ h\in \NN \\ p|h\Rightarrow p \leq \xi} } \mu(h)\r_F(h^k)\left( \frac{(2B)^n}{h^{kn}}+O\left(\left(\frac{2B}{h^k}\right)^{n-1}+1\right)\right).
\end{align*}Since the summation sums only square-free $h$, the condition $p|h\Rightarrow p \leq \xi$ implies 
$$ h \leq \prod_{p \leq \xi} p = \exp \left( \sum_{p \leq \xi} \log p \right) \leq e^{2\xi},$$where the last inequality is gotten by the prime number theorem. Moreover, it follows from the proof of Theorem 3.2 of Poonen in \cite{PSV} that $\r_F(p^2)=O(p^{2n-2})$ (or we can also deduce this from lemma \ref{Np2 nonsq} with $B=p^2$). Subsequent lifting will lead to $\r_F(p^j)=O(p^{jn-2})$ for $j\geq 2$. Together with the fact that $\r_F$ is multiplicative, we have for square-free $h$ that $\r_F(h^k)=O(h^{nk-2+\e})$ for any $\e>0$. The $\e$ is needed here in order to bound the product of $r_h$ copies of the implicit constant of $O(p^{2n-2})$, where $r_h$ is the number of distinct prime factors of $h$. Using the fact that $h$ is square-free, we have $h\geq r_h !$  and the Stirling's formula will give us the desired bound. With this, we obtain

$$\#N^{(k)}_\xi=(2B)^n\prod_{p \leq \xi}\left(1-\frac{\r_F(p^k)}{p^{nk}} \right)+O\left((2B)^{n-1}e^{2\xi(k-1+\e)}+e^{2\xi(nk-1+\e)}\right). $$ Again because $\r_F(p^k)=O(p^{kn-2})$, the infinite product $$ \g_{k,F}:=\prod_{\text{prime }p\in \ZZ}\left(1-\frac{\r_F(p^k)}{p^{nk}}\right)$$ converges and we have
$$\#N^{(k)}_\xi \geq (2B)^n\g_{k,F}+O(B^{n-1}). $$

Next, by lemma \ref{Np2 nonsq}, we have
\begin{align*}
\#M^{(2)}&\leq \sum_{ \xi< p \leq B}\#\{ \nu \in \ZZ_B^n \ | \ p^2|F(\nu)\}\\
&= \sum_{ \xi< p \leq B }O\left( \frac{B^n}{p^2}+B^{n-1}\right)\\
&\leq c \left( \frac{B^n}{\xi}+\frac{B^n}{\log B}\right)
\end{align*} for some constant $c>0$ depending only on $n$ and $d$. The first term of the upper bound is obtained by integral estimate and the second is by the Prime Number Theorem.

Lastly, $\#N_\text{nfr, 2}^{(k)}=0$ for $B$ big enough. In effect, there exists a constant $C_F>0$ such that $ |F(\nu)| \leq C_F ||\nu||^d$. Thus, for all $B>C_F$, prime $p>B$, $\nu\in \ZZ_B^n$ and $k>d$, we have
$$p^k>B^k\geq B^{d+1}>C_FB^d\geq|F(\nu)|, $$ so no such $p^k$ divides $F(\nu)$.

Combining together all the estimates, we get
$$\#N_\text{fr}^{(k)} \geq \g_{k,F}(2B)^n+ O(B^{n-1})-\frac{c}{\xi}(2B)^n+o_{d,n}(B^n)$$ for $B>\max\{\xi,C_F\}$. The fact that $\r_F(p^k)=O(p^{kn-2})$ implies $\g_{k,F}$ converges and $\g_{k,F}$ is zero if and only if one of its factors is zero. So in order to make $\g_{k,F}>0$, it's sufficient to choose $k$ big enough such that $\r_F(p^{k})<p^{nk}$ for all prime $p$. More explicitly, choose a $\nu_0$ such that $F(\nu_0)\neq 0$ and look at its prime factorization $\displaystyle \prod_i p_i^{\b_i}$ in $\ZZ$. Then any $\displaystyle k> \max_i\{\b_i,d\}$ will do. We fix this $k$ and for any $\l\in(0,1)$, by choosing $\xi$ big enough so that $\frac{c}{\xi} \ll \g_{k,F}$, we get 
$$ \#N_\text{fr}^{(k)} \geq \l \g_{k,F}(2B)^n $$  for $B$ big enough. Since for all $\xi\gg0$, $\#N^{(k)}_\xi \geq \#N_\text{fr}^{(k)}$ and $$\#N^{(k)}_\xi \sim (2B)^n\prod_{p \leq \xi}\left(1-\frac{\r_F(p^k)}{p^{nk}} \right),$$ we get 
$$ \prod_{p \leq \xi}\left(1-\frac{\r_F(p^k)}{p^{nk}} \right) \geq \limsup_{B \rightarrow \infty}\frac{\#N_\text{fr}^{(k)}}{(2B)^n}\geq \liminf_{B \rightarrow \infty}\frac{\#N_\text{fr}^{(k)}}{(2B)^n} \geq  \l \g_{k,F} $$ for all $\xi \gg 0$ and $\l \in (0,1)$. Taking $\l \rightarrow 1$ (which forces $\xi \rightarrow \infty$ ) will complete the proof.

\end{proof}

\begin{corollary} \label{k-free Co} For any $F(\ve) \in \ZZ[\ve]$, then there exists an integer $k_0$, such that for all integer $k \geq k_0$ and for all $\l \in (0,1)$, there exist $B_0>0$ depending on $F,k,\l$ and $c_{k,F}>0$ depending on $F,k$ such that
$$\# \{ \nu \in \ZZ_B^n \ | \ F(\nu) \text{ is } k\text{-free} \}\geq \l c_{k,F}(2B)^n $$ whenever $B \geq B_0$.
\end{corollary}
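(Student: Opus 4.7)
The plan is to reduce the corollary to Lemma \ref{k-free} by passing to the radical of $F$. First handle the degenerate cases: if $F \equiv 0$ then $F(\nu) = 0$ is $k$-free for all $\nu$, and if $F$ is a nonzero constant $c$ then $F(\nu) = c$ is $k$-free for every $k > \max_p \ord_p(c)$; in either case the count is $(2B+1)^n$ and the claim is trivial. Otherwise, factor $F = c\prod_i F_i^{e_i}$ in $\ZZ[\ve]$, where $c \in \ZZ\setminus\{0\}$ is the content and the $F_i$ are distinct primitive irreducibles (Gauss's lemma), and set $G := \prod_i F_i$, the squarefree part of the primitive part of $F$. By construction $G \in \ZZ[\ve]$ is a nonconstant polynomial with no repeating irreducible factor, so Lemma \ref{k-free} applies to $G$ and produces a threshold $k_0'$ together with a positive Euler product $\g_{k_0', G}$ such that, for any $\l \in (0,1)$ and any sufficiently large $B$,
$$\#\{\nu \in \ZZ_B^n : G(\nu) \text{ is } k_0'\text{-free}\} \;\ge\; \l\,\g_{k_0', G}(2B)^n.$$

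The key step is a pointwise valuation inequality. Let $e := \max_i e_i$ and $C := \max_p \ord_p(c)$; both are finite constants depending only on $F$. For any $\nu \in \ZZ^n$ with $G(\nu) \ne 0$ and any prime $p$,
$$\ord_p(F(\nu)) \;=\; \ord_p(c) + \sum_i e_i \ord_p(F_i(\nu)) \;\le\; C + e\,\ord_p(G(\nu)).$$
Hence whenever $G(\nu)$ is $k_0'$-free we obtain $\ord_p(F(\nu)) \le C + e(k_0' - 1)$ for every prime $p$, and therefore $F(\nu)$ is $k$-free for every integer $k \ge k_0 := C + e(k_0' - 1) + 1$. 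In the remaining case $G(\nu) = 0$ we have $F(\nu) = 0$, which is $k$-free by the stated convention. Either way, for $k \ge k_0$ we have the containment
$$\{\nu \in \ZZ_B^n : G(\nu) \text{ is } k_0'\text{-free}\} \;\subseteq\; \{\nu \in \ZZ_B^n : F(\nu) \text{ is } k\text{-free}\}.$$

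Combining this containment with the lower bound furnished by Lemma \ref{k-free} applied to $G$ yields
$$\#\{\nu \in \ZZ_B^n : F(\nu) \text{ is } k\text{-free}\} \;\ge\; \l\,\g_{k_0', G}(2B)^n$$
for every $k \ge k_0$, every $\l \in (0,1)$, and every $B \ge B_0(F, k, \l)$, so we can take $c_{k,F} := \g_{k_0', G} > 0$, a constant depending only on $F$. There is no serious obstacle here: the whole argument is a reduction through the radical, and the only bookkeeping needed is keeping track of the exponent inflation by the factor $e = \max_i e_i$ (and the harmless additive shift $C$) when converting $G$-valuations into $F$-valuations.
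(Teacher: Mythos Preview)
Your proof is correct and follows essentially the same route as the paper: pass to the radical $G=\prod_i F_i$ of $F$, apply Lemma~\ref{k-free} to $G$, and convert $k_0'$-freeness of $G(\nu)$ into $k$-freeness of $F(\nu)$ via the exponent inflation $e=\max_i e_i$. The paper's proof is terser (it absorbs the content into the factorization $F=\prod f_i^{\a_i}$ and takes $k\ge k'\a$), while you separate out the content $c$ explicitly and handle the degenerate cases $F\equiv 0$ and $\deg F=0$, but the underlying argument is identical.
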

\begin{proof}Write $\displaystyle  F=\prod_{i=1}^r f_i^{\a_i}$ where $f_i$ are distinct irreducible factors of $F$ in $\ZZ[\ve]$. Let $\displaystyle  f:=\prod_{i=1}^r f_i$ with total degree $d$ and $\displaystyle \a:=\max_i\{\a_i \}$. Now it is immediate by the previous lemma that for all $k\geq k'\a$, where $k'> d$ big enough as in the previous lemma, we obtain our corollary with $c_{k,F}=\g_{k',f}>0$. 
\end{proof}

\begin{lemma}\label{Good density} For any integer $N \geq 2$, we denote $\Pfr_N(m)$ to be the $N^\text{th}$- power-free part of the integer $m$, i.e. the smallest positive integer $\ell$ such that $ \frac{|m|}{\ell}$ is a $N^\text{th}$-power of an integer. Suppose a primitive $F(\ve)\in \ZZ[\ve]$ is not a $p^\text{th}$-power in $\CC[\ve]$ for all prime $p|N$. Then for all $M>2$, we have
$$\#\{ \nu \in \ZZ^n_B\ | \ \left(\Pfr_N(F(\nu))\right)^M> ||\nu||\} \sim (2B)^n.$$
\end{lemma}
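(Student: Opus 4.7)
The plan is to reduce to a single prime $p\mid N$, then use a quantitative uniform Hilbert irreducibility theorem to bound the count of ``bad'' $\nu$.

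\textbf{Reduction to a single prime.} For any integer $m$ and any prime $p\mid N$, the identity $v_q(\Pfr_k(m)) = v_q(m) \bmod k$ together with the elementary inequality $a\bmod p\leq a\bmod N$ (which holds since $p\mid N$: if $a\bmod N = r$, then $a\bmod p = r\bmod p \leq r$) gives $\Pfr_p(m)\leq\Pfr_N(m)$. Fixing any prime $p\mid N$ (for which $F$ is not a $p$-th power in $\CC[\ve]$ by hypothesis), it therefore suffices to prove
$$|S_B^{(p)}| := \#\{\nu\in\ZZ^n_B : \Pfr_p(F(\nu))^M \leq \|\nu\|\} = o(B^n).$$

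\textbf{Auxiliary irreducible polynomials.} Because every nonzero complex number is a $p$-th power, the hypothesis forces $F/\ell$ to be a non-$p$-th power in $\CC(\ve)$ for every nonzero $\ell\in\ZZ$ (otherwise $F=\ell g^p = (\ell^{1/p}g)^p$ would be a $p$-th power in $\CC[\ve]$). Capelli's theorem then shows $Y^p - F(\ve)/\ell$ and $Y^p + F(\ve)/\ell$ are irreducible over $\QQ(\ve)$, and so $G_\ell^{\pm}(\ve,Y):= \ell Y^p \mp F(\ve)$ is absolutely irreducible in $\ZZ[\ve,Y]$. After discarding the $O(B^{n-1})$ values with $F(\nu)=0$, for each remaining $\nu\in S_B^{(p)}$ one writes $|F(\nu)|=\ell b^p$ with $\ell=\Pfr_p(|F(\nu)|)\in\{1,\ldots,\lfloor B^{1/M}\rfloor\}$ and $b\in\ZZ_{\geq 1}$, so one of $G_\ell^{\pm}(\nu,Y)\in\ZZ[Y]$ has the rational root $Y=b$.

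\textbf{Quantitative HIT and summation.} For each $\ell$ and each sign I apply a uniform quantitative Hilbert irreducibility theorem (in the spirit of S.~Cohen, via the large sieve) to bound the number of $\nu\in\ZZ^n_B$ such that $G_\ell^{\pm}(\nu,Y)$ has a rational root in $Y$ by $C B^{n-1/2}(\log B)^A$, with $C$ and $A$ depending only on $n$, $\deg F$, $p$. The crucial uniformity comes from the fact that the coefficient height of $G_\ell^\pm$ is $O(B^{1/M}+\|F\|)$, whose logarithm is $O(\log B)$ and is absorbed into $(\log B)^A$. Summing over $\ell\in\{1,\ldots,\lfloor B^{1/M}\rfloor\}$ and the two sign choices,
$$|S_B^{(p)}| \leq O(B^{n-1}) + 2\sum_{\ell=1}^{\lfloor B^{1/M}\rfloor} C B^{n-1/2}(\log B)^A = O\bigl(B^{n-1/2+1/M}(\log B)^A\bigr).$$
Since $M>2$ gives $n-1/2+1/M<n$, this is $o(B^n)$, and so $\#\{\nu\in\ZZ^n_B:\Pfr_N(F(\nu))^M>\|\nu\|\} = (2B+1)^n - |S_B| \sim (2B)^n$.

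\textbf{Main obstacle.} The decisive ingredient is securing the uniform quantitative HIT in the third paragraph, with the implicit constant depending only polylogarithmically on the coefficient height; this is what preserves the $B^{-1/2+1/M}$ saving when summing the $B^{1/M}$ values of $\ell$. While this is the classical output of the large-sieve treatment of the Galois action on the $Y$-roots of $G_\ell^\pm$, making the uniformity in $\ell$ precise (rather than hiding it in a constant depending on $G_\ell$) is the delicate point that must be verified with care.
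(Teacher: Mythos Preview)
Your strategy parallels the paper's: parametrize the bad $\nu$ by integer solutions to $F(\nu)=\pm z\,y^N$ (or $y^p$) with $|z|$ small, invoke Capelli to get irreducibility of the auxiliary polynomial, apply a point-counting result uniform in $z$, and sum over $z$. The reduction from $N$ to a single prime $p\mid N$ via $\Pfr_p(m)\le\Pfr_N(m)$ is a clean simplification the paper does not make.

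The substantive difference is the counting tool, and this is where your proposal has a genuine gap. The paper applies Heath-Brown's determinant-method theorem (Theorem~15 of \cite{HCR}) to the plane curve $F(T_1)-z_0Y^N=0$ for $n=1$, obtaining
\[
\#N(f_{z_0},B,(C_FB^d)^{1/N})\ll_{d,\e}(C_FB^d)^\e B^{1/N}\log\|f_{z_0}\|,
\]
and then inducts on $n$. The point is that Heath-Brown's bound comes with the dependence on $f_{z_0}$ \emph{spelled out} as a factor $\log\|f_{z_0}\|=O(\log B)$, which is exactly the uniformity needed to sum over $|z_0|\le B^{1/M}$. You instead invoke a Cohen-type large-sieve Hilbert irreducibility bound $CB^{n-1/2}(\log B)^A$ and assert that $C$ depends only on $n$, $\deg F$, $p$, with polylogarithmic dependence on the coefficient height of $G_\ell^\pm$. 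As you yourself flag, this uniformity is the crux, and it is not part of the standard statements of Cohen's theorem, where the implied constant simply depends on the polynomial. Without a reference or a self-contained argument establishing that the large-sieve constant is $O((\log\|G_\ell^\pm\|)^A)$ with absolute $A$, the sum over $\ell\le B^{1/M}$ is not controlled and the argument is incomplete. The paper's route through Heath-Brown is chosen precisely because that theorem carries the required uniformity off the shelf; if you want to keep your approach, you must either locate a suitably uniform HIT in the literature or redo the large-sieve argument tracking the $\ell$-dependence explicitly.
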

\begin{proof} Let the total degree of $F$ be $d$ and hence there exists a constant $C_F \geq 1$ such that $ |F(\nu)|\leq C_F ||\nu||^d.$ Define 
$$S_M(F,B):=\left\{ (\nu, y, z)\in \ZZ^{n+2}\ \left| \begin{array}{lll} ||\nu||\leq B, \\ |y|\leq (C_FB^d)^{\frac{1}{N}}, \quad F(\nu)=y^N z \\ 0<|z|\leq B^{\frac{1}{M}},    \end{array}\right.\right\}. $$
We will prove by induction that
$$\# S_M(F,B) \ll_{F,\e,M,N}C^\e_FB^{(n-1)+\frac{1}{N}+\frac{1}{M}+2d\e}\log B \qquad \forall \e>0.$$
The implicit constants of the big $O$'s and small $o$'s that appear in this proof will depend only on $F$, $\e$, $M$ and $N$. We are going to apply Theorem 15 of Heath-Brown in \cite{HCR}, so we try to use notations that are coherent with it.
For $n=1$, for all $z_0 \in \ZZ$, let
$$N(f_{z_0},B,(C_FB^d)^{\frac{1}{N}}):=\left\{(\nu,y)\in \ZZ^{2} \ \left| \begin{array}{ll} ||\nu||\leq B, \ |y|\leq (C_FB^d)^{\frac{1}{N}},\\ f_{z_0}(\nu,y):=F(\nu)-z_0y^N=0 \end{array}\right.\right\}. $$Then 
\begin{align*}
\# S_M(F,B)=\sum_{0<|z_0|\leq B^{\frac{1}{M}}} \#N(f_{z_0},B,(C_FB^d)^{\frac{1}{N}}). 
\end{align*}Since for all prime $p|N$, $F$ is not a $p^\text{th}$-power in $\CC[T_1]$, the same holds in $\CC(T_1)$. By Capelli's lemma (\cite{LA} Chapter VI, Theorem 9.1), $f_{z_0}(T_1,Y)=F(T_1)-z_0Y^N$ is absolutely irreducible in $\CC(T_1)[Y]$ for all $z_0 \neq 0$. We need this fact for the next step.

Now we apply Theorem 15 of Heath-Brown in \cite{HCR} on $N(f_{z_0},B,(C_FB^d)^{\frac{1}{N}})$. Let $T:=\max\{B^d,C_FB^d \}=C_FB^d $. Then for all $\e>0$, there exists a constant $D=D_{d,\e}$ and $k \in \NN$ with
\begin{align*}
k &\ll_{d,\e}T^\e \exp \left\{ \frac{\log B\log (C_FB^d)^\frac{1}{N}}{\log (C_FB^d)}\right\} \log || f_{z_0}||\\
&\ll_{d,\e} (C_F B^d)^\e B^\frac{1}{N} \log || f_{z_0}||,
\end{align*}such that there exists $\tilde{f_1}, \ldots, \tilde{f_k} \in \ZZ[T_1,Y]$, coprime to $f_{z_0}$ and with degrees at most $D$, such that every $(\nu,y)$ counted by $N(f_{z_0},B,(C_FB^d)^{\frac{1}{N}})$ is a zero of some polynomial $\tilde{f_i}$. By B\'ezout's theorem, the number of points of intersection of curves $\tilde{f_i}=0$ and $f_{z_0}=0$ is bounded by $\deg \tilde{f_i} \cdot \deg f_{z_0}\leq D(d+N).$ This gives immediately 
$$\#N(f_{z_0},B,(C_FB^d)^{\frac{1}{N}}) \ll_{d,\e}D(d+N) (C_F B^d)^\e B^\frac{1}{N} \log || f_{z_0}||.$$ So
\begin{align}
\#S_M(F,B) &\ll_{d,\e,N}\sum_{0<|z_0|\leq B^{\frac{1}{M}}}(C_F B^d)^\e B^\frac{1}{N} \log || f_{z_0}||  \label{n1}\\
&\ll_{d,\e,N}\sum_{0<|z_0|\leq B^{\frac{1}{M}}}(C_F B^d)^\e B^\frac{1}{N} \log B \notag\\
&\ll_{d,\e,N}C_F^\e B^{\frac{1}{N}+\frac{1}{M}+d\e} \log B,\notag
\end{align}where we may choose $B\geq || F||$ and hence $|| f_{z_0}||\leq B$ for $|z_0|\leq B^{\frac{1}{M}}$.
Now we proceed to prove for a general $n \geq 2$. For all $x \in \ZZ^{n-1}$, let $F_x(T_n):=F(x,T_n)\in \ZZ[T_n]$. For all $p|N$, since $F(\ve)$ is not a $p^\text{th}$-power, we look at the $p^\text{th}$-power-free part of $F(\ve)$ in $\ZZ[\ve]$, call it $G_p(\ve)$. In other words, $G_p(\ve)$ is the smallest degree polynomial such that  $\frac{F(\ve)}{G_p(\ve)}$ is a $p^\text{th}$-power in $\ZZ[\ve]$. So $G_p(\ve)=\prod_j G_{p,j}(\ve)^{\b_j}$ where $G_{p,j}$ are distinct irreducible factors and $0<\b_j <p$. Let $g_p(\ve):=\prod_j G_{p,j}(\ve)$, which has no repeated irreducible factor in $\ZZ[\ve]$. Using Gauss' lemma on UFDs and by reindexing if necessary, the discriminant of $g_p(\textbf{X},T_n) \in (\ZZ[\textbf{X}])[T_n]$ is not a zero polynomial in $\ZZ[\textbf{X}]$. So there are at most $O(B^{n-2})$ of $x \in \ZZ^{n-1}_B$ such that $g_{p,x}(T_n):=g_p(x,T_n)$ has repeated irreducible factor in $\ZZ[T_n]$. This will imply that there are at most $O(B^{n-2})$ of $x \in \ZZ^{n-1}_B$ such that $F_x(T_n)$ is a $p^\text{th}$-power in $\ZZ[T_n]$. So we have
\begin{equation}\#S_M(F,B)= \sum_{\substack{x \in \ZZ_B^{n-1} \\ \hidewidth F_x \text{ non-$p^\text{th}$-power} \hidewidth \\ \text{for all }p|N \hidewidth}}\#S_M(F_x,B)+\sum_{\substack{x \in \ZZ_B^{n-1} \\ \hidewidth F_x \text{ $p^\text{th}$-power} \hidewidth \\ \text{for some }p|N }}\#S_M(F_x,B).\label{n>2}\end{equation}
Notice that $|F_x(\nu_n)| \leq (C_FB^d)|\nu_n|^d$ for $x \in \ZZ_B^{n-1}$ and $\deg F_x \leq d$. So using the result from the case $n=1$, we get 
\begin{align*}
&\#S_M(F,B)\\ &\ll_{d,\e,N}(2B)^{n-1}(C_FB^d)^\e B^{\frac{1}{N}+\frac{1}{M}+d\e} \log B + O(B^{n-2} \cdot B \cdot B^{\frac{1}{M}})\\
&\ll_{F,\e,M,N}C_F^\e B^{(n-1)+\frac{1}{N}+\frac{1}{M}+2d\e}\log B,
\end{align*}where for the estimation of the second sum, we use the fact that $y$ is determined (up to sign for the case $N$ is even) once $(\nu_n,z_0)$ is fixed in $F_x.$ When $\e$ is sufficiently small relative to $d$ and $M>2$, we get $\#S_M(F,B)=o(B^n).$ Lastly, define 
$$ \text{Bad}_M(F,B):=\{ \nu \in \ZZ^{n}_B\ | \ (\Pfr_N(F(\nu)))^M \leq ||\nu||\}, $$ which is the complement of $\{ \nu \in \ZZ^n_B\ | \ (\Pfr_N(F(\nu)))^M>||\nu||\}$ in $\ZZ_B^n$. It is a simple exercise to show that $\text{Bad}_M(F,B)$ injects into $S_M(F,B)$ via the map $\nu \longmapsto (\nu,\sqrt[N]{|F(\nu)|/\Pfr_N(F(\nu))} , \sgn(F(\nu))\Pfr_N(F(\nu)))$, hence giving us the lemma.
\end{proof}

\begin{corollary}\label{CorPfree}With the same hypothesis as in the previous lemma and further let $g \in \NN$ such that $0<g\leq B^{\frac{1}{N+2}}$, then for $M$ big enough, we have
$$\#\left\{ \left.\nu \in \ZZ^n_\frac{B}{g}\ \right| \ \left(\Pfr_N(F(\nu))\right)^M> g^{M(N-1)+1}||\nu||\right\} \sim \left(2\frac{B}{g}\right)^n.$$In particular, when $N=2 \text{ or }3$, then any $M> 8$ is admissible.
\end{corollary}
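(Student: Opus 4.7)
The plan is to mimic the proof of Lemma \ref{Good density} essentially verbatim, with the scale $B$ replaced by $B/g$ in the ambient box and with the bound on $|z|$ inflated by a factor of $g^{N-1}$. First I would introduce
$$S_{M,g}(F,B) := \left\{(\nu, y, z) \in \ZZ^{n+2} : ||\nu|| \leq B/g,\ |y| \leq (C_F(B/g)^d)^{1/N},\ F(\nu) = y^N z,\ 0 < |z| \leq g^{N-1}B^{1/M}\right\}$$
and check that the ``bad'' set $\{\nu \in \ZZ^n_{B/g} : (\Pfr_N(F(\nu)))^M \leq g^{M(N-1)+1}||\nu||\}$, which is the complement of the set in the claim, injects into $S_{M,g}(F,B)$ via $\nu \mapsto \bigl(\nu, \sqrt[N]{|F(\nu)|/\Pfr_N(F(\nu))}, \sgn(F(\nu))\Pfr_N(F(\nu))\bigr)$. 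The inflated range on $|z|$ is forced by $(g^{M(N-1)+1}||\nu||)^{1/M} = g^{N-1+1/M}||\nu||^{1/M} \leq g^{N-1}B^{1/M}$ upon applying $||\nu|| \leq B/g$.

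Next, I would show by induction on $n$ that $\#S_{M,g}(F,B) = o((B/g)^n)$. For the base case $n=1$, applying Theorem 15 of \cite{HCR} to the curve $f_{z_0}(T_1,Y) := F(T_1) - z_0 Y^N$ (absolutely irreducible by Capelli, as in the previous lemma) for each nonzero $z_0$ yields
$$\#N(f_{z_0}, B/g, (C_F(B/g)^d)^{1/N}) \ll_{d,\e,N}(C_F(B/g)^d)^\e (B/g)^{1/N}\log ||f_{z_0}||,$$
and summing over $0 < |z_0| \leq g^{N-1}B^{1/M}$, using $\log||f_{z_0}|| \ll \log B$ (valid for $B$ large since $g^{N-1}B^{1/M} = O(B)$), gives
$$\#S_{M,g}(F,B) \ll_{F,\e,M,N} g^{N-1-1/N}\,B^{1/N + 1/M + d\e}\log B$$
in the $n=1$ case. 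The induction step proceeds exactly as in Lemma \ref{Good density}: split the sum over $x \in \ZZ^{n-1}_{B/g}$ according to whether $F_x(T_n)$ is a $p^{\text{th}}$-power for some $p \mid N$; the discriminant argument of the previous lemma shows the exceptional set has size $O((B/g)^{n-2})$, and on each such fibre $y$ is determined up to sign by the pair $(\nu_n, z_0)$.

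The main obstacle is calibrating how large $M$ must be. Multiplying the $n=1$ bound by the $(B/g)^{n-1}$ generic fibres gives a main term of order $g^{N-n-1/N}B^{(n-1)+1/N+1/M+d\e}\log B$; dividing by $(B/g)^n = g^{-n}B^n$ and invoking $g \leq B^{1/(N+2)}$, the binding inequality reduces to
$$\frac{1}{M} + d\e \;<\; \frac{N-1}{N(N+2)},$$
which after $\e \to 0$ forces $M > N(N+2)/(N-1)$. This yields $M > 8$ for $N=2$ and $M > 15/2$ for $N=3$, so any $M > 8$ works in both cases, matching the ``in particular'' clause. The contribution from the exceptional fibres produces only the weaker condition $M > (N+2)/2$, which is automatic. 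Taking complements in $\ZZ^n_{B/g}$ then yields the claimed asymptotic $\sim (2B/g)^n$.
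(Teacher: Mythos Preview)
Your proposal is correct and follows essentially the same approach as the paper: define the inflated auxiliary set $S_{M,g}(F,B)$ (the paper writes the $z$-bound as $g^{N-1+1/M}(B/g)^{1/M}$, which equals your $g^{N-1}B^{1/M}$), inject the bad set into it, apply Heath--Brown fibrewise in the $n=1$ case, and induct. The only cosmetic difference is that the paper converts everything to powers of $B/g$ via $g\le (B/g)^{1/(N+1)}$ before reading off the threshold, whereas you keep $g$ and $B$ separate and invoke $g\le B^{1/(N+2)}$ at the end; both routes yield the identical condition $M>N(N+2)/(N-1)$, hence $M>8$ for $N=2,3$.
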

\begin{proof} The proof is just a slight modification of the previous proof, so we will continue using all the notations from the previous proof. Again, the implicit constants of the big $O$'s and small $o$'s in this proof depend only on $F,\e,M$ and $N$. We are going to show that the complement of $$\left\{ \left.\nu \in \ZZ^n_\frac{B}{g}\ \right| \ \left(\Pfr_N(F(\nu))\right)^M> g^{M(N-1)+1}||\nu||\right\},$$ which is 
$$\text{Bad}_M(F,B,g):=\left\{ \left.\nu \in \ZZ^n_\frac{B}{g}\ \right| \ \left(\Pfr_N(F(\nu))\right)^M \leq  g^{M(N-1)+1}||\nu||\right\},$$ has order $o\left(2\frac{B}{g}\right)^n$. Just like the previous proof, $\text{Bad}_M(F,B,g)$ injects into
$$S_M(F,B,g):=\left\{ (\nu, y, z)\in \ZZ^{n+2}\ \left| \begin{array}{lll} ||\nu||\leq \frac{B}{g}, \\ |y|\leq \left(C_F\left(\frac{B}{g}\right)^d\right)^{\frac{1}{N}}, \quad F(\nu)=y^N z \\ 0<|z|\leq g^{N-1+\frac{1}{M}} \left(\frac{B}{g}\right)^{\frac{1}{M}},    \end{array}\right.\right\}. $$ Thus, it suffices to show that $\#S_M(F,B,g)=o\left(2\frac{B}{g}\right)^n$. Comparing $S_M(F,B,g)$ to $S_M(F,B)$ from the previous proof, this boils down to just changing $B$ to $\frac{B}{g}$ and slightly increasing the upper bound for $z$ with a factor of $g^{N-1+\frac{1}{M}}$. So for $n=1$, from $(\ref{n1})$, we have
\begin{align*}
\#S_M(F,B,g) &\ll_{d,\e,N}\sum_{0<|z_0|\leq g^{N-1+\frac{1}{M}} \left(\frac{B}{g}\right)^{\frac{1}{M}}}\left(C_F\left(\frac{B}{g}\right)^d\right)^\e \left( \frac{B}{g}\right)^\frac{1}{N} \log || f_{z_0}||\\
&\ll_{d,\e,N}C_F^\e \left(\frac{B}{g}\right)^{\frac{1}{N}+\frac{1}{M}+d\e}  g^{N-1+\frac{1}{M}}\log B,
\end{align*}where we choose $B \geq|| F||$ and hence $||f_{z_0}||\leq \max \{B, g^{N-1}B^\frac{1}{M} \}\leq B^2$. Since $g \leq B^\frac{1}{N+2}$, we have $\frac{B}{g}\geq g^{N+1}$ and $ B\leq \left( \frac{B}{g}\right)^\frac{N+2}{N+1}< \left( \frac{B}{g}\right)^2$, so 
\begin{align*}\#S_M(F,B,g)&\ll_{d,\e,N}C_F^\e \left(\frac{B}{g}\right)^{\frac{1}{N}+\frac{1}{M}+d\e}  \left(\frac{B}{g}\right)^{\frac{N-1}{N+1}+\frac{1}{(N+1)M}}\log \left(\frac{B}{g}\right),
\end{align*}which is $o\left(2\frac{B}{g}\right)$ for $M$ sufficiently big and $\e$ sufficiently small. Using the same induction argument as in $(\ref{n>2})$, we have for $n \geq 2$, 
\begin{align}
&\#S_M(F,B,g) \notag \\ &\ll_{d,\e,M,N}\left(2\frac{B}{g}\right)^{n-1}\left(C_F\left(\frac{B}{g}\right)^d\right)^\e \left(\frac{B}{g}\right)^{\frac{1}{N}+\frac{1}{M}+d\e+\frac{N-1}{N+1}+\frac{1}{(N+1)M}} \log \left(\frac{B}{g}\right) \notag\\
 &\quad + O\left(\left(\frac{B}{g}\right)^{n-2} \cdot \frac{B}{g} \cdot g^{N-1+\frac{1}{M}} \left(\frac{B}{g}\right)^{\frac{1}{M}}\right) \notag \\
&\ll_{F,\e,M,N}C_F^\e \left(\frac{B}{g}\right)^{n-1+\frac{1}{N}+\frac{1}{M}+2d\e+\frac{N-1}{N+1}+\frac{1}{(N+1)M}} \log \left(\frac{B}{g}\right), \label{last ineq}
\end{align}which is also $o\left(2\frac{B}{g}\right)^n$ for $M$ sufficiently big and $\e$ sufficiently small. We remark that for the case $N=2,3$, the exponent of $\frac{B}{g}$ in (\ref{last ineq}) are $n-1+2d\e+\frac{5}{6}+\frac{4}{3M}$ and $n-1+2d\e+\frac{5}{6}+\frac{5}{4M}$ respectively. For any $M >8$, there exists $\e>0$ such that this exponent is strictly less than $n$, hence giving us corollary \ref{CorPfree} for the case $N=2,3$ and these are the instances where we will apply this corollary. 
\end{proof}
%%%%%%%%%%%%%%%%%%%%%%%%%%%%%%%%%%%%%%%%%%%%%%%%%%%%%%%%%%%%%%%%%%%%%%
\section{Proof of Proposition \ref{p1}}
\label{Proof of p1}
%%%%%%%%%%%%%%%%%%%%%%%%%%%%%%%%%%%%%%%%%%%%%%%%%%%%%%%%%%%%%%%%%%%%%%
We keep all the notations as previously defined in this paper. The implicit constants of the big $O$'s and small $o$'s that appear in this proof will depend only on $\D_E, n$ and $P$. The main idea in this proof is to first apply lemma \ref{lowerB KN}. This allows us to get a lower bound of $\hat{h}_{E_\nu}(P_\nu)$ in term of $\D_{E_\nu}^{\min}$, for all ``nice'' $\nu \in \ZZ^n$. Then we try to bound $\D_{E_\nu}^{\min}$ below in term of $\D_{E_\nu}$ and then in term of $h(\nu)$, again for all ``nice'' $\nu$. The nontrivial part of the proof is to show that after we impose again and again certain niceness conditions on $\nu$, this set of of ``nice'' $\nu$ has a positive density in $\ZZ^n_B(\D_E,P)$.   

Fix a big integer $k\geq 4$, which we will specify how big it should be at the end of the proof and let $N_k:=\text{lcm}(1,2,3,\ldots,k)$. Then by lemma \ref{lowerB KN}, there is an absolute constant $C_1>0$ such that for any $P \in E(K)_{nt}$ and for any $\nu \in \ZZ^n_B(\D_E,k,P_{\nu}^{nt})$, where 
$$\ZZ^n_{B}(\D_E,k,P_{\nu}^{nt}):= \{\nu \in \ZZ^n_{B}(\D_E,P) \ | \ \D_E(\nu) \text{ is $k$-free, } P_\nu \in E_\nu(\QQ)_{nt} \},$$ we have 
\begin{align} \label{Eq1}
\sum_{\nu \in \ZZ^n_{B}(\D_E,P)} \frac{\hat{h}_{E_\nu}(P_\nu)}{h(\nu)} &\geq \frac{C_1}{N_k^2}\sum_{\nu \in \ZZ^n_{B}(\D_E,k,P_{\nu}^{nt})} \frac{\log |\D^{\min}_{E_\nu}|}{h(\nu)} \notag  \\
&=\frac{C_1}{N_k^2}\sum_{\nu \in \ZZ^n_{B}(\D_E,k,P_{\nu}^{nt})} \frac{\log |\D^{\min}_{E_\nu}|}{\log ||\nu||}. 
\end{align} We obtain the second line because of the convention that we made earlier : $h(\nu)=\log H([1,\nu_1,\ldots,\nu_n]).$

Next, we claim that $\D_E(\ve)=-16(4A^3(\ve)+27B^2(\ve))$ is never a constant times a twelfth power in $\ZZ[\ve]$, otherwise lemma \ref{Mason Co} says that there exists $g(\ve)\in \QQ[\ve]$ such that $\frac{ A(\ve)}{g(\ve)^4},\frac{B(\ve)}{g(\ve)^6 } \in \ZZ$. So using Gauss' lemma, we can write $\D_E(\ve)=\a (F(\ve))^{a+12b}$, where $\a \in \ZZ$, $F(\ve)$ is primitive in $\ZZ[\ve]$, non-power in $\CC[\ve]$, $a \in \{1,2,3,\ldots, 11 \}$ and $b\in \NN$. Recall that for all primes $p$ in $\QQ$,
$$0\leq \ord_p(\D^{\min}_{E_\nu}) \equiv \ord_p(\D_{E_\nu}) \pmod{12}, $$ so $\ord_p(\D^{\min}_{E_\nu}) \text{ is at least the unique integer in }\{0,1,2,\ldots, 11 \}$ congruent to $\ord_p(\D_{E_\nu}) \pmod{12}$. We split into two cases in order to get a lower bound of $\D^{\min}_{E_\nu}$.\\

\noindent Case 1: $a \neq 4,8$. If we let $\sqfr(m):=\Pfr_2(m)$ and $\sq(m):=\frac{|m|}{\sqfr(m)}$ be the square-free part and square part of an integer $m$, then we have
\begin{align*}
|\D_E(\nu)|=|\a|\left|\sq(F(\nu))\right|^{a+12b}\left|\sqfr(F(\nu))\right|^{a+12b}.
\end{align*}
Notice that for every prime factor $p$ of $\sqfr(F(\nu))$ that is relatively prime to $\a$, its power $\b_p$ in $F(\nu)$ is odd and thus $a \b_p \not \equiv 0 \pmod{12}$ for $a \neq 4,8$. So $p$ is a factor of $\D^{\min}_{E_\nu}$ and we have for $\nu \in \ZZ^n_{B}(\D_E,k,P_{\nu}^{nt})$, 
$$ |\D^{\min}_{E_\nu}|\geq \frac{|\sqfr(F(\nu))|}{| \a|}. $$

\noindent Case 2: $a = 4 \text{ or } 8$. The argument is similar to case 1 except that we look at $\cufr(F(\nu)):=\Pfr_3(F(\nu))$, the cube-free part of $F(\nu)$. Then for every prime factor $p$ of $\cufr(F(\nu))$ that is relatively prime to $\a$, its power $\b_p$ in $F(\nu)$ is not a multiple of $3$ and thus $a \b_p \not \equiv 0 \pmod{12}$ for $a =4 \text{ or } 8$. In fact, $a \b_p \equiv 4 \text{ or } 8 \pmod{12}.$ So again, for $\nu \in \ZZ^n_{B}(\D_E,k,P_{\nu}^{nt})$, we have  
$$ |\D^{\min}_{E_\nu}|\geq \frac{|\cufr(F(\nu))|}{|\a|^2}. $$

Fix $M>2$ and let $$\text{Good}_M(F,B):= \left\{\begin{array}{lll}
                   \left\{ \nu \in \ZZ^n_{B}(\D_E)\ | \ |\sqfr(F(\nu))|^M>||\nu||\right\} & \text{if } a \neq 4,8 \\ \\ \left\{ \nu \in \ZZ^n_{B}(\D_E)\ | \ |\cufr(F(\nu))|^M>||\nu||\right\} & \text{if } a = 4 , 8.
 \end{array}\right.$$ Then from inequality (\ref{Eq1}), we have

\begin{align*}
\sum_{\nu \in \ZZ^n_{B}(\D_E,P)} \frac{\hat{h}_{E_\nu}(P_\nu)}{h(\nu)} 
&\geq \frac{C_1}{N_k^2}\sum_{\nu \in \ZZ^n_{B}(\D_E,k,P_{\nu}^{nt})\cap \text{Good}_M(F,B)} \frac{\log ||\nu||^{\frac{1}{M}}-\log|\a|^2}{\log ||\nu||}\\
&=\frac{C_1}{N_k^2M}\qquad \quad \ \sum_{\hidewidth \qquad \nu \in \ZZ^n_{B}(\D_E,k,P_{\nu}^{nt})\cap \text{Good}_M(F,B)\hidewidth }\ 1 \qquad \qquad+\qquad o(B^n),
\end{align*}
where we use lemma \ref{small o} to bound the sum of the second term. Now we are at the final step of analyzing the asymptotic cardinal of the set $\ZZ^n_{B}(\D_E,k,P_{\nu}^{nt})\cap \text{Good}_M(F,B)$. It is straightforward that $$\# \ZZ^n_{B}(\D_E,P) \sim (2B)^n$$ as the set of points for which $\D_E(\ve)$ vanishes or $P_\nu$ is not defined is of order at most $O(B^{n-1})$. Next, by Mazur's theorem (\cite{SAE} Chapter VIII, Theorem 7.5), the order of $E_\nu(\QQ)_{tor}$ is at most $12$. Hence if $P_\nu$ is torsion, $\nu$ must satisfy one of the twelve algebraic equations of torsion points that depends on $P$. Since $P$ is non-torsion in $E(K)$, none of the twelve equations is identically zero and so
$$\# \{\nu \in \ZZ^n \ | \ H(\nu) \leq B \text{ and } P_\nu \text{ is torsion}  \}=O( B^{n-1}).$$ This gives 
$$\#\ZZ^n_{B}(\D_E,P_{\nu}^{nt}):= \#\{\nu \in \ZZ^n_{B}(\D_E,P) \ | \  P_\nu \in E_\nu(\QQ)_{nt} \} \sim (2B)^n.$$ We now apply corollary \ref{k-free Co} to $\D_E(\ve)$, and we specify that $k$ is big enough such that $c_{k,\D_E}>0$ as in the corollary. Then for any $\l \in (0,1)$ and for $B$ big enough, we get
$$\#\ZZ^n_{B}(\D_E,k,P_{\nu}^{nt}) \geq \l c_{k,\D_E}(2B)^n.$$ Lastly, since $F$ is primitive and is neither a square nor cube in $\ZZ[\ve]$, we use lemma \ref{Good density} to conclude 
$$\#\left(\ZZ^n_{B}(\D_E,k,P_{\nu}^{nt})\cap \text{Good}_M(F,B) \right)\geq \l c_{k,\D_E}(2B)^n$$ and this give us
\begin{align*}
\sum_{\nu \in \ZZ^n_{B}(\D_E,P)} \frac{\hat{h}_{E_\nu}(P_\nu)}{h(\nu)} \geq \frac{C_1}{N_k^2M} \l c_{k,\D_E}(2B)^n+o(B^n)
\end{align*}
This proves Proposition \ref{p1} with a lower bound $\frac{C_1}{N_k^2M} \l c_{k,\D_E}$ for any $\l \in (0,1)$ and $M>2$, hence we can take $L_2=\frac{C_1}{2N_k^2} c_{k,\D_E}$.

%%%%%%%%%%%%%%%%%%%%%%%%%%%%%%%%%%%%%%%%%%%%%%%%%%%%%%%%%%%%%%%%%%%%%%
\section{Proof of Theorem \ref{T1}}
\label{Proof of T1}
%%%%%%%%%%%%%%%%%%%%%%%%%%%%%%%%%%%%%%%%%%%%%%%%%%%%%%%%%%%%%%%%%%%%%%
The idea of this proof is to reduce to the case of Proposition \ref{p1}, since a point in $\PP^n_\QQ$ can be represented with integers coordinates. Again, the implicit constants of the big $O$'s and small $o$'s that appear in this proof will depend only on $\D_E, n$ and $P$, unless stated otherwise. Let $\o=\left(\frac{u_1}{v_1}, \ldots, \frac{u_n}{v_n} \right)\in \QQ^n$ in the lowest form and let $\ell_\o:=\lcm (v_1, \ldots, v_n)$. Recall that the Weierstrass equation of $E_\o$ is 
$$Y^2=X^3+A(\o)X+B(\o) ,$$ which might not have integer coefficients. In order to estimate $\D_{E_\o}^{\min}$, we need to look at a Weierstrass equation that is $\QQ$-isomorphic to $E_\o$ with integer coefficients. Let $d$ be the maximum of $\deg A$ and $\deg B$. By a change of variable $Y'=\ell_\o^{3d}Y$ and $X'=\ell_\o^{2d}X$, we obtain an integral coefficients Weierstrass equation:
$$ Y'^2=X'^3+\ell_\o^{4d}A(\o)X'+\ell_\o^{6d}B(\o)$$ with discriminant
$$\D'_{E_{\o}}:=-16 \ell_\o^{12d}(4A(\o)^3+27B(\o)^2)=\ell_\o^{12d}\D_E(\o). $$ Let us set up the following correspondence to ease our argument. If we write
$$\D_E(\ve)= \sum_{|\a|\leq d} \d_{\a} T_1^{\a_1}\ldots T_n^{\a_n}, $$ then let 
$$\widetilde{\D}_E(T_0,\ve):=\sum_{|\a|\leq d} \d_{\a} T_0^{12d-|\a|}T_1^{\a_1}\ldots T_n^{\a_n}. $$ Notice that $\widetilde{\D}_E$ is a homogeneous polynomial of degree $12d$. We have a one-to-one correspondence between
$$
\QQ^n_B(\D_E)=\{ \o \in \QQ^n \ | \ 1 <H(\o) \leq B  \text{ and }\D_E(\o) \neq 0 \} 
$$
and 
$$ \{\nu=(\nu_0,  \nu_1, \ldots, \nu_n) \in \ZZ^{n+1}_B(\widetilde{\D}_E) \ | \ \gcd(\nu_0,\nu_1, \ldots, \nu_n)=1  \text{ and }\nu_0 > 0 \} $$ via the map%s 
\begin{align*}\o &\longmapsto \nu:=\left(\ell_\o,\frac{u_1 \ell_\o}{v_1}, \ldots, \frac{u_n \ell_\o}{v_n}\right)%\\
\end{align*}
This correspondence gives
$$\sum_{\o \in \QQ^n_{B}(\D_E,P)} \frac{\hat{h}_{E_\o}(P_\o)}{h(\o)}=\frac{1}{2} \quad \sum_{\hidewidth\substack{ \nu \in \ZZ^{n+1}_{B}(\widetilde{\D}_E,P) \\ \gcd \nu =1 }\hidewidth } {}^{'} \quad \frac{\hat{h}_{E_{\left(\frac{\nu_1}{\nu_0}, \ldots, \frac{\nu_n}{\nu_0}\right)}}(P_{\left(\frac{\nu_1}{\nu_0}, \ldots, \frac{\nu_n}{\nu_0}\right)})}{h([\nu_0,\ldots,\nu_n])},$$ where $\gcd \nu := \gcd (\nu_0, \ldots, \nu_n)$ and the primed summation means $\nu_0 \neq 0$ with the factor $\frac{1}{2}$ taking care of the negative $\nu_0$. In order to use the inclusion-exclusion argument effectively in the later part, we need to modify the estimate on the set of $\nu$ 
for which $\widetilde{\D}_E(\nu)$ is $k$-free. Let $$\sqfr \widetilde{\D}_E(T_0,\ve):=f_E(T_0,\ve),$$ which is a homogeneous polynomial too and let
\begin{align*}&\ZZ_B^{n+1}(\widetilde{\D}_E, k, P^{nt})\\
&:=
\left\{ \nu=(\nu_0, \ldots, \nu_n) \in \ZZ_B^{n+1}(\widetilde{\D}_E,P) \ \left| \begin{array}{l} \nu_0 \neq 0, f_E(\nu) \text{ is $k$-free, } \\ P_\o \in E_\o(\QQ)_{nt} \\ \text{where } \o=(\frac{\nu_1}{\nu_0}, \ldots, \frac{\nu_n}{\nu_0}) \end{array}\right.\right\},
\end{align*} for some $k\geq 4$ big enough as in lemma \ref{k-free}. If $\a$ is the maximum of the exponents of distinct irreducible factors of $\widetilde{\D}_E(T_0,\ve)$, then for all $\nu \in \ZZ_B^{n+1}(\widetilde{\D}_E, k, P^{nt})$ with $\gcd \nu =1$, let $\o=\left(\frac{\nu_1}{\nu_0}, \ldots, \frac{\nu_n}{\nu_0}\right)$ and we have
$$\widetilde{\D}_E(\nu)=\D'_{E_\o}  \quad \text{is } k\a\text{-free.}$$ Thus, letting $N_k:=\lcm(1, \dots, k\a)$ and using the same argument as in lemma \ref{lowerB KN}, we get
$$\sum_{\o \in \QQ^n_{B}(\D_E,P)} \frac{\hat{h}_{E_\o}(P_\o)}{h(\o)}\geq \frac{1}{2} \quad \sum_{\hidewidth\substack{ \nu \in \ZZ_B^{n+1}(\widetilde{\D}_E, k, P^{nt}) \\ \gcd \nu =1 }\hidewidth } {}^{'} \quad \frac{C_1}{N_k^2} \frac{\log \D^{\min}_{E_\o}}{\log ||\nu||}.$$
Notice that $\widetilde{\D}_E(T_0,\ve)$ is not a constant times a twelfth power in $\ZZ[T_0,\ve]$, otherwise it will imply the same for $\widetilde{\D}_E(1,\ve)=\D_E(\ve)$. Just like in the proof of Proposition \ref{p1}, we write $\widetilde{\D}_E(T_0,\ve)=\b (F(T_0,\ve))^{a+12b}$, where $\b \in \ZZ$, $F(T_0,\ve)$ is primitive homogeneous in $\ZZ[T_0,\ve]$ and non-power in $\CC[T_0,\ve]$, $b\in \NN$ and $a \in \{1,2,3,\ldots, 11 \}$. Since the same property
$$0\leq \ord_p(\D^{\min}_{E_\o}) \equiv \ord_p(\widetilde{\D}_E(\nu)) \pmod{12} $$ still hold for all prime $p$ in $\QQ$, we can repeat the corresponding whole argument as in section \ref{Proof of p1} and get
\begin{equation}\sum_{\o \in \QQ^n_{B}(\D_E,P)} \frac{\hat{h}_{E_\o}(P_\o)}{h(\o)}\geq \frac{1}{2} \quad \sum_{\hidewidth\substack{  \nu \in \text{Good}_M(F,B)\\ \nu \in \ZZ_B^{n+1}(\widetilde{\D}_E, k, P^{nt}) \\ \gcd \nu =1 }\hidewidth } {}^{'} \quad \frac{C_1}{N_k^2} \frac{1}{M} + O\left(\sum_{\nu \in  \ZZ_B^{n+1}} \frac{1}{\log ||\nu||}\right) \label{inent}\end{equation}
where
$$\text{Good}_M(F,B):= \left\{\begin{array}{lll}
                   \left\{ \nu \in \ZZ^{n+1}_{B}(\widetilde{\D}_E)\ \left| \ |\sqfr(F(\nu))|^M>||\nu||\right.\right\} & \text{if } a \neq 4,8 \\ \\ \left\{ \nu \in \ZZ^{n+1}_{B}(\widetilde{\D}_E)\ \left| \ |\cufr(F(\nu))|^M>||\nu||\right.\right\} & \text{if } a = 4 , 8,
 \end{array}\right.$$ for any fixed $M>2$. We know from lemma \ref{small o} that the second term is $o(B^{n+1})$. As for the first term, we estimate it by an inclusion-exclusion argument using the M\"{o}bius function:
\begin{align} \label{mo eq}\sum_{\hidewidth\substack{  \nu \in \text{Good}_M(F,B)\\ \nu \in \ZZ_B^{n+1}(\widetilde{\D}_E, k, P^{nt}) \\ \gcd \nu =1 }\hidewidth } {}^{'} 1 \qquad &= \qquad \sum_{\hidewidth\substack{  \nu \in \text{Good}_M(F,B)\\ \nu \in \ZZ_B^{n+1}(\widetilde{\D}_E, k, P^{nt})  }\hidewidth } {}^{'}\qquad  \sum_{g| \gcd \nu} \m(g) \notag \\
&=\sum_{g=1}^B \m(g) \qquad \sum_{\hidewidth\substack{  \nu \in \text{Good}_M(F,B)\\ \nu \in \ZZ_B^{n+1}(\widetilde{\D}_E, k, P^{nt}) \\ g|\gcd(\nu) }\hidewidth } {}^{'}1.
\end{align} 
To deal with the inner sum, we have to analyse the sets of which we are summing over. Recall that $F$ is a homogeneous polynomial, so we have $F(g\nu)=g^t F(\nu)$ where $t=\deg F$ and the trivial inequalities
\begin{align*}
\sqfr(F(g\nu)) &\geq \frac{\sqfr(F(\nu))}{g},\\
\cufr(F(g\nu)) &\geq \frac{\cufr(F(\nu))}{g^2}.
\end{align*}
These imply the following inclusions:
\begin{align*}
&\text{Good}_M(F,B,g):=\left\{ \left.\nu \in \ZZ_B^{n+1}(\widetilde{\D}_E)   \ \right| \ g|\gcd(\nu)  \right\}\cap\text{Good}_M(F,B)\\
&=\left\{\begin{array}{lll}
                   g\cdot \left\{ \left.\nu \in \ZZ^{n+1}_{\frac{B}{g}}(\widetilde{\D}_E)\ \right| \ |\sqfr(F(g\nu))|^M>||g\nu||\right\} & \text{if } a \neq 4,8 \\ \\ g\cdot \left\{ \left.\nu \in \ZZ^{n+1}_{\frac{B}{g}}(\widetilde{\D}_E)\ \right| \ |\cufr(F(g\nu))|^M>||g\nu||\right\} & \text{if } a = 4 , 8
 \end{array}\right.\\
 \\
&\supseteq\left\{\begin{array}{lll}
                  g\cdot \left\{ \nu \in \ZZ^{n+1}_{\frac{B}{g}}(\widetilde{\D}_E)\ \left| \ \left|\dfrac{\sqfr(F(\nu))}{g}\right|^M>g||\nu||\right\}\right. & \text{if } a \neq 4,8 \\ \\ g\cdot \left\{ \nu \in \ZZ^{n+1}_{\frac{B}{g}}(\widetilde{\D}_E)\ \left| \ \left|\dfrac{\cufr(F(\nu))}{g^2}\right|^M>g||\nu||\right\}\right. & \text{if } a = 4 , 8,
 \end{array}\right.
\end{align*}where the notation $g\cdot S$ means $\{ g\nu | \ \nu \in S \}$ for any set $S$ of vectors. 
By Corollary \ref{CorPfree}, for $g \leq B^{\frac{1}{5}}$ and $M > 8$, we have 
\begin{equation}\text{Good}_M(F,B,g) \sim \left(2\frac{B}{g}\right)^{n+1}.  \label{GBd}\end{equation}

On the other hand, $f_E$ is also homogeneous. Let the degree of $f_E$ be $r$ and we have
\begin{align*}&\ZZ_B^{n+1}(\widetilde{\D}_E, k, P^{nt},g)\\&:=\left\{\left.\nu \in \ZZ_B^{n+1}(\widetilde{\D}_E)   \ \right| \ g|\gcd(\nu)  \right\}\cap \ZZ_B^{n+1}(\widetilde{\D}_E, k, P^{nt})\\
&=
g \cdot \left\{ \nu=(\nu_0, \ldots, \nu_n) \in \ZZ_\frac{B}{g}^{n+1}(\widetilde{\D}_E,P) \ \left| \begin{array}{l} \nu_0 \neq 0,\ g^rf_E(\nu) \text{ is $k$-free, } \\ P_\o \in E_\o(\QQ)_{nt} \\ \text{where } \o=(\frac{\nu_1}{\nu_0}, \ldots, \frac{\nu_n}{\nu_0}) \end{array}\right.\right\}.
\end{align*}For $\mu(g)\neq 0$, i.e. $g$ is squarefree, we have the inclusions
$$ g \cdot \ZZ_\frac{B}{g}^{n+1}(\widetilde{\D}_E, k-r, P^{nt})\subseteq\ZZ_B^{n+1}(\widetilde{\D}_E, k, P^{nt},g)\subseteq g \cdot \ZZ_\frac{B}{g}^{n+1}(\widetilde{\D}_E, k, P^{nt}).$$
From (\ref{GBd}), lemma \ref{k-free} and Mazur's theorem again, we have for any $\e>0$, there exists $B_\e$ such that if $\frac{B}{g} \geq B_\e$, $g \leq B^{\frac{1}{5}}$ and $\mu(g) \neq 0$ then
$$ \g_{k-r,f_E}-\e\leq  \frac{\#\left(\ZZ_B^{n+1}(\widetilde{\D}_E, k, P^{nt},g) \cap \text{Good}_M(F,B,g)\right)}{\left(2\frac{B}{g}\right)^{n+1}} \leq \g_{k,f_E}+\e. $$

It is important to remark that the implicit constants of the big $O$'s that appear in the rest of the proof depend only on $n$ and nothing else. From (\ref{mo eq}), for $B>B^\frac{5}{4}_\e$, $ \qquad \displaystyle \sum_{\hidewidth\substack{  \nu \in \text{Good}_M(F,B)\\ \nu \in \ZZ_B^{n+1}(\widetilde{\D}_E, k, P^{nt}) \\ \gcd \nu =1 }\hidewidth } {}^{'} 1 \qquad $ is bounded below by
\begin{align*}  & \sum_{g=1}^{\left\lfloor B^\frac{1}{5}\right\rfloor} \m(g) \quad \sum_{\hidewidth\substack{  \nu \in \text{Good}_M(F,B,g)\\ \nu \in \ZZ_B^{n+1}(\widetilde{\D}_E, k, P^{nt},g)}\hidewidth } {}^{'}1 \quad + \quad \sum_{g=B^\frac{1}{5}}^B \m(g) \quad \sum_{\hidewidth\substack{  \nu \in \text{Good}_M(F,B,g)\\ \nu \in \ZZ_B^{n+1}(\widetilde{\D}_E, k, P^{nt},g) }\hidewidth } {}^{'}1\\
&\geq \sum_{\substack{g=1\\ \m(g)=1}}^{B^\frac{1}{5}} \m(g) (\g_{k-r,f_E}-\e)\left( 2\frac{B}{g}\right)^{n+1}  + \sum_{\substack{g=1\\ \m(g)=-1}}^{B^\frac{1}{5}} \m(g) (\g_{k,f_E}+\e)\left( 2\frac{B}{g}\right)^{n+1} \\
&\qquad +O\left(\sum_{g=B^\frac{1}{5}}^B \left( 2\frac{B}{g}\right)^{n+1}\right)\\
&=  (\g_{k-r,f_E}-\e)(2B)^{n+1} \sum_{\substack{g=1}}^{B^\frac{1}{5}} \frac{\m(g)}{g^{n+1}}   \\ &\qquad+  (2\e+\g_{k,f_E}-\g_{k-r,f_E}) (2B)^{n+1}\sum_{\substack{g=1\\ \m(g)=-1}}^{B^\frac{1}{5}} \frac{\m(g)}{g^{n+1}}\\
&\qquad +(2B)^{n+1} O\left(\sum_{g=B^\frac{1}{5}}^B \frac{1}{g^{n+1}}\right).
\end{align*}So we get
\begin{align*}
&\liminf_{B\rightarrow \infty}\frac{1}{(2B)^{n+1}}\sum_{\o \in \QQ^n_{B}(\D_E,P)} \frac{\hat{h}_{E_\o}(P_\o)}{h(\o)} \\
&\geq \frac{1}{2}\frac{C_1}{N_k^2} \frac{1}{M}\left((\g_{k-r,f_E}-\e)\frac{1}{\z(n+1)} + (2\e+\g_{k,f_E}-\g_{k-r,f_E} )O(1) \right)
\end{align*} where $\z$ is the Riemann zeta function and one possible bound for the $O(1)$ here is $\z (n+1)$.  So 
\begin{align*}
&\liminf_{B\rightarrow \infty}\frac{2 \z(n+1)}{(2B)^{n+1}}\sum_{\o \in \QQ^n_{B}(\D_E,P)} \frac{\hat{h}_{E_\o}(P_\o)}{h(\o)} \\
&\geq \frac{C_1}{M N_k^2}\left((\g_{k-r,f_E}-\e) + (2\e+\g_{k,f_E}-\g_{k-r,f_E} ) O(1) \right).
\end{align*}
Since this holds for all $\e>0$, $M > 8$ and the same inclusion-exclusion argument will give 
$$\# \{ \o \in \QQ^n \ | \ H(\o) \leq B \} \sim \frac{(2B)^{n+1}}{2 \z (n+1)},$$ we have proven Theorem \ref{T1} with $L_1=\frac{C_1}{8 N_k^2}(\g_{k-r,f_E}+(\g_{k,f_E}-\g_{k-r,f_E} ) O(1))$. Notice that $L_1$ is positive for $k$ big enough because the sequence $\left(\g_{k,f_E}\right)_{k=1}^\infty$ is increasing and bounded above by $1$.

%%%%%%%%%%%%%%%%%%%%%%%%%%%%%%%%%%%%%%%%%%%%%%%%%%%%%%%%%%%%%%%%%%%%%%
\section{Discussion}
%%%%%%%%%%%%%%%%%%%%%%%%%%%%%%%%%%%%%%%%%%%%%%%%%%%%%%%%%%%%%%%%%%%%%%
Our proofs of Proposition \ref{p1} and Theorem \ref{T1} use the weakened form of Lang's height conjecture proven by Silverman mentioned in lemma \ref{lowerB KN}, which is a key tool in our proof that there is a positive density $\g_{k-r,f_E}+  (\g_{k,f_E}-\g_{k-r,f_E} ) O(1)$ of $\o \in \QQ^n(\D_E)$ such that $\log |\D_{E_\o}^{\min}|  > \left(\frac{1}{M}h(\o)+O_E(1)\right)$ and hence $ \hat{h}_{E_\o}(p) \geq \frac{C_1}{N_k^2}\left(\frac{1}{M}h(\o)+O_E(1)\right) $ for all $p \in E_\o(\QQ)_{nt}$. If we denote 
$$\m(\o):= \min \left\{\hat{h}_{E_\o}(p)\ \left| \ p \in  E_\o(\QQ)_{nt} \right. \right \},$$ then we can get a uniform upper bound of the quotient $\frac{h(\o)}{\m(\o)}$ for a positive density of $\o \in \QQ^n(\D_E)$. In view of this, we can apply this to the following theorem to say something about the integral points on $E_\o$.

\begin{theorem}\label{app}(\cite{SST} Cor.4.2) Set the following notations:

$F$ a number field.

$S$ a finite set of places of the absolute values of $F$.

$R_S(\e)=\left\{ x \in F \ | \ \sum_{v \in S}\max(-v(x),0) \geq \e h(x)\right\},$ so in particular $R_S(1)=R_S$, the ring of $S$-integers of $F$. 

$T/F$ a quasi-projective variety and $h_T$ the height on $T$ correspondind to a fixed ample divisor, chosen so that $h_T(t) \geq 1$ for all $t \in T(\overline{F})$.

$C/F$ an algebraic family of smooth, irreducible, projective curves over $T$, i.e. there is a $F$-morphism $\pi : C \rightarrow T$ which is proper and smooth of relative dimension $1$; each fiber $C_t$ is a smooth irreducible projective curve.

$J/F$ the Jacobian of $C/T$, so $J$ is an abelian acheme over $T$. Let $D \in \Div_{F}(J)$ a very ample and symmetric divisor. For each $t \in T$, the fiber $J_t$ is the Jacobian variety of the fiber $C_t$ and let
\begin{align*}\r(t)&:=\rank J_t(F), \\
\t(t)&:= \# J_t(F)_{tors}, \\
\m(t)&:=\min \left\{\hat{h}_{J,D}(p)\ \left| \ p \in  J_t(F), P \text{ non-torsion} \right. \right \}.
\end{align*}
Let $\e >0$ and $f \in F(C)$ be a non-constant rational function on $C$ with the following property:
$$ \text{the map } [f:1]:C\longrightarrow \PP^1 \text{ is a morphism.}$$ Then there exists a constant $c$ depending only on $[F:\QQ]$, $\e$, $C$, $f$ and $T$, such that for all $t \in T(F)$, the size of the set
$$ \left\{ p \in C_t(F)\ | \ f(p)\in R_s(\e) \right\}$$is at most $$ \t(t) c^{1+\#S + \r(t)}\left(\frac{h_T(t)}{\m(t)} \right)^\frac{\r(t)}{2}.$$  
\end{theorem}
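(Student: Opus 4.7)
The plan is to combine three ingredients: a Silverman--Tate style specialization theorem which transports the height on $C_t$ induced by $f$ to the N\'eron--Tate canonical height on $J_t$; the quasi-integrality condition $f(p)\in R_S(\e)$, which controls $h_f(p)$ in terms of $h_T(t)$ and $\#S$; and a Minkowski lattice-point count in the Mordell--Weil lattice $(J_t(F)/\mathrm{tors})\otimes\RR$, whose squared minimum is $\m(t)$.

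First, after possibly a finite base change in $T$, I fix a section and the associated Abel--Jacobi embedding $j:C\hookrightarrow J$. On the generic fiber the divisor class $j^*\Theta$ (for a symmetric theta divisor $\Theta$ on $J$) is a positive rational multiple of the canonical class, so the classical comparison $h_f(p)=\tfrac{2}{g}\hat h_{J,\Theta}(j(p))+O(1)$ applies. The family version of Silverman's specialization theorem \cite{SHS}, strengthened by Call and by Lang, then yields a uniform estimate
$$
\hhat_{J_t,D}(j(p)) \;\le\; c_1\, h_f(p) + c_2\, h_T(t) + c_3
$$
for all $t\in T(F)$ and $p\in C_t(F)$ with $f(p)\neq\infty$, where $c_1,c_2,c_3$ depend only on $C\to T$, $f$, $D$, and $[F:\QQ]$.

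Second, the condition $f(p)\in R_S(\e)$ forces the bulk of the height of $f(p)$ to come from its $S$-adic valuations. A standard local decomposition of the height, combined with a count of admissible $S$-valuation vectors (of Evertse or Schlickewei type), gives $h_f(p)\le c_4^{1+\#S}(1+h_T(t))$, with $c_4$ depending only on $C,f,T,\e,[F:\QQ]$. Combining with the first step produces
$$
\hhat_{J_t,D}(j(p))\;\le\; C^{1+\#S}(1+h_T(t))
$$
for every $p$ in the set we wish to count.

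Third, I work in $(J_t(F)/\mathrm{tors})\otimes\RR$ with the N\'eron--Tate inner product associated to $D$. Since this lattice has squared minimum $\m(t)$, all of its successive minima are bounded below by $\sqrt{\m(t)}$, so by Minkowski's second theorem the number of lattice points of squared norm at most $R^2$ is at most $c_5^{\r(t)}\bigl(1+R/\sqrt{\m(t)}\bigr)^{\r(t)}$ for an absolute constant $c_5$. Taking $R^2 = C^{1+\#S}(1+h_T(t))$, multiplying by the torsion factor $\t(t)$ to account for cosets, and absorbing $c_5$ and the constant $1$ in $1+R/\sqrt{\m(t)}$ into a factor $c^{1+\#S+\r(t)}$ (using $h_T(t)\ge 1$ to compare $1$ with $R/\sqrt{\m(t)}$ up to constants), delivers the claimed bound
$$
\t(t)\, c^{1+\#S+\r(t)} \Bigl( h_T(t)/\m(t) \Bigr)^{\r(t)/2}.
$$
The principal obstacle is the first step: the comparison $\hhat_{J_t,D}(j(p))\le c_1 h_f(p)+c_2 h_T(t)+c_3$ must hold \emph{uniformly} in both $t$ and $p$, with constants depending only on the family data. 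The pointwise Silverman--Tate bound on a single fiber is not enough; one must invoke the family version for abelian schemes, applied to the Jacobian $J\to T$ equipped with the divisor class of $D$, together with a careful comparison between the Weil height $h_f$ on $C_t$ and the restriction to $j(C_t)$ of the canonical height on $J_t$ that is uniform in $t$.
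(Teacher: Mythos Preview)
The paper does not prove this statement at all: Theorem~\ref{app} is quoted verbatim as Corollary~4.2 of \cite{SST} and used as a black box in the proof of Corollary~\ref{last co}. So there is no ``paper's own proof'' to compare against; any assessment must be against the argument in \cite{SST} itself.

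That said, your sketch contains a genuine error in Step~2. You claim that the condition $f(p)\in R_S(\e)$ forces a height bound $h_f(p)\le c_4^{1+\#S}(1+h_T(t))$. This is false: quasi-integrality does not bound height. Already for $F=\QQ$, $S=\{\infty\}$, $\e=1$ one has $R_S(\e)=\ZZ$, and $f(p)$ can be an arbitrarily large integer, so $h_f(p)$ is unbounded while $h_T(t)$ is fixed. The ``Evertse/Schlickewei-type'' results you allude to bound the \emph{number} of $S$-unit or $S$-integral solutions, not their height; an effective height bound of this shape would amount to an effective Siegel theorem, which is not what is being used here.

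The actual argument in \cite{SST} is structured differently. One separates the quasi-integral points into those of small canonical height (say $\hhat\le c\,h_T(t)$) and those of large canonical height. The small points are counted by your Step~3, the Minkowski-type lattice bound, and this is what produces the factor $(h_T(t)/\m(t))^{\r(t)/2}$. The large points are handled by a Mumford-style gap principle combined with diophantine approximation (a quantitative Roth/Siegel input): in each narrow cone of the Mordell--Weil lattice the large quasi-integral points grow at least geometrically in height, so each cone contains only boundedly many, and the number of cones times the $S$-place bookkeeping yields the factor $c^{1+\#S+\r(t)}$. Your Step~1 (the uniform Silverman--Tate comparison) is indeed needed to make the small/large threshold depend on $h_T(t)$ with constants uniform in $t$, and your discussion of its subtlety is on point; but without replacing Step~2 by the gap-principle argument for the large points, the proof does not go through.
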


\begin{corollary}\label{last co} With the setting and notations as in the proof of Theorem \ref{T1} and let $0<\d <1$ and $M>8$, then there exists a constant $c$ depending only on $E$, such that the set
$$\left\{ \o_\in \QQ^n(\D_E)\ \left| \ \#E_\o(\ZZ) \leq 16 c^{2+\rank E_\o(\QQ) } \left(\sqrt{\frac{MN_k^2}{(1-\d)C_1}} \right)^{\rank E_\o(\QQ)} \right.\right\} $$has density at least $\g_{k-r,f_E}+  (\g_{k,f_E}-\g_{k-r,f_E} ) O(1)$.
\end{corollary}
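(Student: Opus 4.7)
The plan is to apply Theorem~\ref{app} directly to the family $\{E_\o\}_\o$ and combine it with the lower bound on the minimum canonical height $\m(\o)$ that already falls out of the proof of Theorem~\ref{T1}. I would take $F=\QQ$, $S=\{\infty\}$ (so that $R_S(1)=\ZZ$), $\e=1$, base $T$ the open subscheme of $\PP^n$ where $\D_E$ does not vanish, $C=\{E_\o\}$ the universal elliptic curve over $T$ (smooth, proper of relative dimension $1$), $J=C$ (since an elliptic curve is its own Jacobian), $D$ any symmetric very ample divisor normalized so that $\hat h_{J,D}=\hat h_{E_\o}$, and $f$ equal to the $x$-coordinate (giving the degree-$2$ morphism $[f:1]:C\to\PP^1$). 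Every integer point of $E_\o$ then satisfies $x(p)\in\ZZ=R_S(1)$, so Theorem~\ref{app} yields
\[
\#E_\o(\ZZ) \;\le\; \tau(\o)\, c^{1+\#S+\rho(\o)}\!\left(\frac{h(\o)}{\m(\o)}\right)^{\!\rho(\o)/2},
\]
for a constant $c$ that depends only on $E$ (through the family and its fixed projective embedding).

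To control the factors on the right, Mazur's theorem bounds $\tau(\o)\le 16$ unconditionally. The key bound on $\m(\o)$ is the one I would extract from the work in Section~\ref{Proof of T1}: on a set $\mathcal{G}_B\subset\QQ^n_B(\D_E)$ of density at least $\g_{k-r,f_E}+(\g_{k,f_E}-\g_{k-r,f_E})O(1)$, the integral model of $E_\o$ constructed in that section has $k$-free discriminant and satisfies
\[
\log|\D^{\min}_{E_\o}| \;\ge\; \tfrac{1}{M}h(\o)+O_E(1).
\]
Applying Lemma~\ref{lowerB KN} to this integral model then gives $\hat h_{E_\o}(q)\ge\frac{C_1}{N_k^2}\log|\D^{\min}_{E_\o}|$ for every non-torsion $q\in E_\o(\QQ)$, and hence
\[
\m(\o)\;\ge\;\tfrac{C_1}{MN_k^2}h(\o)+O_E(1)\;\ge\;\tfrac{(1-\d)C_1}{MN_k^2}h(\o)
\]
for all $\o\in\mathcal{G}_B$ with $h(\o)$ sufficiently large; excluding the $\o$ of bounded height removes only a density-zero subset and does not affect the asymptotic density of $\mathcal{G}_B$.

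Substituting this bound on $h(\o)/\m(\o)$ into the first display produces exactly the inequality claimed in the corollary, on a set of density at least $\g_{k-r,f_E}+(\g_{k,f_E}-\g_{k-r,f_E})O(1)$, as asserted. The step I expect to require the most care is the bookkeeping linking the integral model of Section~\ref{Proof of T1} to the abstract fiber $E_\o$ appearing in Theorem~\ref{app}: one has to verify that the two canonical heights agree under the chosen symmetric $D$, that the rank and torsion of the two models coincide (they do, since the models are $\QQ$-isomorphic), and that the discriminant estimate coming out of the inclusion--exclusion argument in Section~\ref{Proof of T1} transfers without density loss. Once these normalizations are in place, the application of Theorem~\ref{app} is essentially mechanical, and the positivity of the resulting density for $k$ large is immediate from the fact that $(\g_{k,f_E})_k$ is increasing with limit at most $1$.
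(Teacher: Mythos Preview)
Your proposal is correct and follows essentially the same route as the paper: apply Theorem~\ref{app} with $F=\QQ$, $S=\{\infty\}$, $f=x$, bound $\tau(\o)\le 16$ via Mazur, and then feed in the lower bound $\m(\o)\ge\frac{(1-\d)C_1}{MN_k^2}h(\o)$ extracted from the proof of Theorem~\ref{T1} on the stated positive-density set. The only point the paper spells out that you leave implicit is the verification that $[x:1]:C\to\PP^1$ is actually a morphism (it checks the behavior at $Z=0$ explicitly); your bookkeeping concerns about the integral model are harmless since canonical height, rank, and torsion are invariant under the $\QQ$-isomorphism.
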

\begin{proof} We will apply Theorem \ref{app} for $F=\QQ$, $S=\{ |\cdot|_\infty \}$, $\e=1$ and so $R_S(\e)=\ZZ$. Let $T \subset \QQ^n(\D_E)$ be a quasi-projective variety such that we can define a smooth group scheme $C$ over $T$ associated to our elliptic curve $E/K$:
$$C:  Y^2Z=X^3+A(\ve)XZ^2+B(\ve)Z^3. $$
Notice that since each fiber $C_t=E_t$ is an elliptic curve, it is equal to its Jacobian $J_t$. We will use the rational function $f=x=\frac{X}{Z} \in \QQ(C)$ and we need to show that 
$$ [x:1]: C \longrightarrow \PP^1$$ is a morphism. The map is clearly defined on all points with $Z \neq 0$. From the equation for $C$, we have
$$ [x:1]=[X:Z]=[Y^2-B(\ve)Z^2 : X^2+A(\ve)Z^2].$$ Since any point of $C$ with $Z=0$ will have the form $([0:1:0], t)$, we see that $[x:1]$ will map such a point to $[1:0]$. So $[x:1]$ defines a morphims and Theorem \ref{app} says that there exists a constant $c$ depending on $E$, such that for all $\o \in T(\QQ) \subset \QQ^n(\D_E)$, we have
\begin{align}\#\left\{ p \in E_\o(\QQ)\ | \ x(p)\in \ZZ \right\}  &\leq \t(\o) c^{1+1 + \r(\o)}\left(\frac{h(\o)}{\m(\o)} \right)^\frac{\r(\o)}{2} \notag\\
&\leq 16 c^{2 + \rank E_\o(\QQ)}\left(\frac{h(\o)}{\m(\o)} \right)^\frac{\rank E_\o(\QQ)}{2}, \label{appine1}
\end{align}where we obtain the second inequality by bounding $\t(\o) \leq 16$ using Mazur's theorem.

From the proof (See inequality (\ref{inent})) of Theorem \ref{T1}, we have for any $M>8$,  there is a positive density $\g_{k-r,f_E}+  (\g_{k,f_E}-\g_{k-r,f_E} ) O(1)$ of $\o \in \QQ^n(\D_E)$ such that
$$\m(\o) > \frac{C_1}{N_k^2} \left(\frac{1}{M}h(\o)+O_E(1)\right).$$ For any $0<\d<1$, the set of bounded height
$$B_\d:=\left\{ \o \in \QQ^n \ \left| \ h(\o) \leq  \frac{M |O_E(1)|}{\d}\right. \right\} $$ is a finite set. By excluding these finite points, we still have a positive density $\g_{k-r,f_E}+  (\g_{k,f_E}-\g_{k-r,f_E} ) O(1)$ of $\o \in \QQ^n(\D_E)\backslash B_\d$ such that
\begin{equation}\m(\o) > \frac{C_1}{N_k^2} \left(\frac{1-\d}{M}h(\o)+\frac{\d}{M}h(\o)+O_E(1)\right) \geq \frac{C_1}{N_k^2} \frac{(1-\d)}{M}h(\o).\label{appine2}\end{equation}
Since $T$ is a dense Zariski open subset of $\QQ^n(\D_E)$, from inequalities (\ref{appine1}) and (\ref{appine2}), we have a positive density $\g_{k-r,f_E}+  (\g_{k,f_E}-\g_{k-r,f_E} ) O(1)$ of $\o \in (\QQ^n(\D_E)\backslash B_\d) \cap T(\QQ)$ such that
\begin{align*}\#E_\o(\ZZ) \leq \#\left\{ p \in E_\o(\QQ)\ | \ x(p)\in \ZZ \right\} &\leq 16 c^{2 + \rank E_\o(\QQ)}\left(\frac{h(\o)}{\m(\o)} \right)^\frac{\rank E_\o(\QQ)}{2}  \\
& \leq 16 c^{2 + \rank E_\o(\QQ)}\left(\frac{M N_k^2}{(1-\d)C_1 } \right)^\frac{\rank E_\o(\QQ)}{2},
\end{align*}which completes the proof of the corollary.
\end{proof}
We remark that if the Lang's conjecture is true, then we can improve both Proposition \ref{p1} and Theorem \ref{T1} to $L_2=\frac{C_1}{2}$ and $L_1=\frac{C_1}{8}$, independent of $E$. Also, corollary \ref{last co} will be improved to density $1$.

One might be interested to ask whether we can generalize our initial setting of $\QQ$ to any number field $F$. In order to do that, we first have to replace $\ZZ$ to $F$ integers $\Ocal_F$ in Proposition \ref{p1} and scrutinize all the lemmas used in the proof to see whether they are still valid in $F$. Lemma \ref{lowerB KN} can be easily generalized to $F$ as both the Silverman \cite{SLB} and Kodaira-N\'eron Theorems \cite{SAT} were originally proven for number fields. 
Further, lemmas \ref{small o}, \ref{Mason Co} generalize immediately, Mazur's theorem also has its generalized counterpart, Merel's Theorem. Alternatively, we can use the following Masser's bound (we thank the referee for pointing this out). Using methods from transcendence theory, Masser obtained the upper bound (\cite{MCP} Corollary 2)
 $$\# E(K)_{tor} \leq C_k\sqrt{ h([1, g_1, g_2])} [K:k]\left(h([1, g_1, g_2])+\log [K:k]\right)$$for elliptic curve $E/k \ :\ y^2=4x^3-g_1x-g_3$, where $C_k$ is an effective constant that depends only on the number field $k$ and $K/k$ is any finite field extension. Hence, applying this to our setting over the number field $F$, we can obtain easily that for all $\nu \in (\Ocal_F)^n_B$,
$$\# E_\nu(F)_{tor}\leq C_F (h([1,4A(\nu), 4B(\nu)]))^\frac{3}{2} \leq C'_F (\log B)^\frac{3}{2} $$ where $C_F'$ is an effective constant that depends on $F$ and the polynomials $A(\ve),B(\ve)$. This is sufficient for our application as it gives us the bound 
$$\# \{\nu \in \Ocal_F^n \ | \ H(\nu) \leq B \text{ and } P_\nu \text{ is torsion}  \}=O(B^{n-1}(\log B)^\frac{3}{2})=o(B^n).$$ Besides having the advantage of a computable effective constant, Masser's bound is also true for general abelian varieties (\cite{MSF} Main Theorem and Scholium 2).

What are left to be worked on are lemmas \ref{k-free} and \ref{Good density}. Another, and possibly more interesting problem is to prove convergence of the average, or even better, to prove the average converges to $\hat{h}_{E}(P)$.

%% Note that in the example below, the braces { } around \cite are necessary (due to nested optional parameters)
%\begin{theorem}[Maximum Principle, see also {\cite[Theorem 5]{HillDow}}]\label{T:1}

\subsection*{Acknowledgements}
I would like to thank my advisor, Joseph Silverman, for many enlightening discussions. Also, many thanks to the referee for the valuble and insightful comments and suggestions.

\end{document}